\newtheorem{theorem}{Theorem}[section]
\newtheorem{proposition}[theorem]{Proposition}
\newtheorem{lemma}[theorem]{Lemma}
\newtheorem{corollary}[theorem]{Corollary}
\theoremstyle{definition}
\newtheorem{definition}[theorem]{Definition}
\newtheorem{remark}[theorem]{Remark}
\newtheorem{example}[theorem]{Example}
\DeclareMathOperator{\dist}{dist}
\DeclareMathOperator{\PZ}{\mathrm{PZ}}
\DeclareMathOperator{\SL}{SL}
\DeclareMathOperator{\End}{\mathrm{End}}
\DeclareMathOperator{\Id}{Id}
\DeclareMathOperator{\Jac}{Jac}
\DeclareMathOperator{\B}{B}
\newcommand{\Beta}{\mathbb{B}}
\newcommand{\RR}{\mathbb{R}}
\newcommand{\QQ}{\mathbb{Q}}
\newcommand{\ZZ}{\mathbb{Z}}
\newcommand{\Ocal}{\mathcal{O}}
\newcommand{\Bcal}{\mathcal{B}}
\newcommand{\msc}[1]{\href{https://zbmath.org/classification/?q=#1}{#1}}
\numberwithin{equation}{section}
\renewcommand\subsection{\@startsection{subsection}{2}%
	\z@{.5\linespacing\@plus.7\linespacing}{-.5em}%
	{\normalfont \bfseries}}
\begin{document}
	
	\title{Bolytrope Orders}
	
	\author[Y.~El Maazouz]{Yassine El Maazouz}
	\address[Yassine El Maazouz]{UC Berkeley}
	\email{yassine.el-maazouz@berkeley.edu}
	
	\author[G.~Nebe]{Gabriele Nebe}
	\address[Gabriele Nebe]{RWTH Aachen University}
	\email{gabriele.nebe@rwth-aachen.de}
	
	\author[M.~Stanojkovski]{Mima Stanojkovski}
	\address[Mima Stanojkovski]{RWTH Aachen University and 
		MPI-MiS Leipzig}
	\email{mima.stanojkovski@rwth-aachen.de}
	
	\subjclass[2020]{\msc{11S45}, \msc{16G30}, \msc{52B20}, \msc{20E42}, \msc{51E24}}
	
	\keywords{Polytropes, graduated orders, bolytropes, bolytrope orders, radical idealizer chain, affine buildings.}
	
	\date{}
	
	\maketitle
	
	\begin{abstract}
		Bolytropes are bounded subsets of an affine building that consist of all points that have distance at most $r$ from some polytrope. We prove that the points of a bolytrope describe the set of all invariant lattices of a bolytrope order, generalizing the correspondence between polytropes and graduated orders.
	\end{abstract}

\section{Introduction}
	
The work of this paper finds its purpose within the framework of a larger joint project involving the three authors, 
Marvin Anas Hahn, and Bernd Sturmfels,  
and regarding the investigation of the interplay
between orders $\Lambda$
over discrete valuation rings (in split simple algebras)
 and certain bounded convex subsets $Q$ of affine buildings.
 The set of $0$-simplices, ${\mathcal B}^0$, of the building 
is in bijection with the maximal orders \cite{AbramenkoNebe,Garret}. 
 Closed orders are 
 intersections of finitely many maximal orders. These are 
exactly the Plesken-Zassenhaus orders $\Lambda = \PZ (\mathcal{L})$ of the finite subsets $\mathcal{L}$ of the building; cf.\ \Cref{def:PZ} and Proposition \ref{degfin}. 
Any order $\Lambda $ defines a 
bounded convex set $Q(\Lambda )\subset {\mathcal B}^0$ corresponding to 
the maximal orders that contain $\Lambda $. 
A set of the form $Q(\Lambda )$ is called 
Plesken-Zassenhaus-closed ($\PZ$-closed, for short). 
The $\PZ $-closed sets that are contained in one apartment
are exactly the polytropes 
and the corresponding $\PZ$-orders are the graduated
 orders; cf.\ 
 \cite{EHNSS/21, shemanske} and references therein. 
In this paper, we define the new class of bolytrope orders $\Lambda$ for which $Q(\Lambda)$ is a bolytrope. 
In turn, a bolytrope is defined as the set of all elements in ${\mathcal B}^0$  that have distance at most $r$ from a polytrope, the associated central polytrope.
The special case of ball orders, i.e.\ bolytrope orders where the central polytrope consists of one point, is treated in \Cref{ballorders}. 

The word  ``bolytrope'' is of our own invention and is the fusion of the words ``ball'' and ``polytrope''. Polytropes are the tropical analoga of 
polytopes (see for instance \cite{Jos22}) and describe the 
sets of lattices that are invariant under graduated orders. A bolytrope order is the 
intersection of a ball order and a gradudated order (see \Cref{lem:bolytrope1}) and a bolytrope is the set of invariant lattices
of a bolytrope order. 

One of the main results of this paper is \Cref{th:bolytropes}, stating that bolytropes and bolytrope orders are 
$\PZ $-closed and closed, respectively. 

Closed orders in quaternion algebras have been extensively studied 
in the context of class groups, 
Brandt matrices, and Hecke operators, cf.\ 
 \cite{Arenas3,Arenas4,Arenas2,Arenas1,Tu/11}. 
In Section \ref{d=2}, we apply our results to reprove and extend the work from \cite{Tu/11}, showing in particular that all closed split quaternion orders 
 are bolytrope orders.

In \Cref{Radid}, we present our main tool: the radical idealizer chain of an order. This allows for an inductive procedure to handle bolytrope orders as explained in \Cref{lem:bolytrope3} and also shows 
that the central polytrope $Q$ is uniquely determined by the bolytrope  $B$; its Plesken-Zassenhaus order $\PZ(Q)$ is the 
first term in the radical idealizer chain applied to
the bolytrope order $\PZ(B)$ that happens to be a graduated order. 

\subsection{Notation}

Throughout the paper let $K$ be a discretely valued field
with  
valuation ring $\Ocal_K$, unifomizer $\pi$, and maximal ideal $\mathfrak{m}_K=\Ocal_K\pi \neq \{ 0 \}$. 
If $K$ is commutative, which we assume henceforth, 
there is no need for $K$ to be complete: in particular, $K=\QQ $ with
some $p$-adic valuation is allowed. 
 Let, moreover, $d$ be a positive integer. We write ${\bf 1}$ for the vector $(1,\ldots, 1)\in\ZZ^d$ and $J_d$ for the matrix, in $\ZZ^{d\times d}$, with zeros on the diagonal and ones elsewhere. 

\section{Graduated orders and apartments} 

 An \emph{$\Ocal_K$-lattice} (or simply lattice) in $K^d$ is a free $\Ocal_K$-submodule of maximal rank $d$. The (\emph{homothety}) \emph{class} of a lattice $L$ in $K^d$ is
 \[
 [L]=\{cL \mid c\in K\setminus\{0\}\}=\{\pi^nL \mid n\in\ZZ\},
 \]
 while $\End_{\Ocal_K}(L)$ denotes the endomorphism ring of $L$ as an $\Ocal_K$-module. Note that any two homothetic lattices have the same endomorphism ring.
 
 An \emph{$\Ocal _K$-order $\Lambda$} in the matrix ring $K^{d\times d}$ is an 
 $\Ocal _K$-lattice that is also a ring,
 i.e.\ $\Lambda $ is multiplicatively closed and contains the 
 identity element $\Id _d$ of $K^{d\times d}$. If $\Lambda$ is an order in $K^{d\times d}$, then a {\em $\Lambda$-lattice} is a lattice in $K^d$ that is also a $\Lambda$-module.

\subsection{Graduated orders and polytropes} 

In the present section, we define graduated orders following \cite{Ple83} and collect some related results from \cite{EHNSS/21,Ple83}. 

\begin{definition}\label{def:graduated}
	An $\Ocal _K$-order $\Lambda $ in $K^{d\times d}$ 
	is called {\em graduated} if $\Lambda $ contains a
	complete set of orthogonal primitive idempotents 
	$\epsilon _1,\ldots , \epsilon _d$ of $K^{d\times d}$.
\end{definition}

The primitive idempotents of $K^{d\times d}$ are exactly the
projections onto 1-dimensional subspaces of $K^d$, so each set
$\{ \epsilon _1,\ldots , \epsilon _d \}$ as in Definition \ref{def:graduated}
defines a {\em frame}
$$K^d = \epsilon _1 K^d \oplus \ldots \oplus \epsilon _d K^d = K e_1  \oplus \ldots \oplus K e_d  $$
i.e. a decomposition of $K^d$ as a direct sum of 1-dimensional subspaces.
In any frame basis $(e_1,\ldots , e_d)$ the idempotents are diagonal
matrices with exactly one entry 1 on the diagonal.
The projection onto the $ij$-matrix entry
$\epsilon _i \Lambda \epsilon_j $ is an $\Ocal _K$-submodule
of $\epsilon _i K^{d\times d} \epsilon _j \cong K$. 
Hence, writing matrices with respect to the frame basis 
$(e_1,\ldots ,e_d)$, there exists a matrix $M = (M_{ij}) \in \ZZ ^{d\times d}$ such that  the graduated order $\Lambda $ is 
of the form 
\begin{align*}
\Lambda (M) &= \{ X=(X_{ij})\in K^{d\times d} \mid X_{ij} \in {\mathfrak m}_K^{M_{ij}} 
	\mbox{ for all } i,j = 1,\ldots ,d \}. 
\end{align*}
	The matrix $M$ is called the {\em exponent matrix} of $\Lambda$.

	\begin{remark}
	When we state that a given order $\Lambda $ is contained in some graduated 
	order $\Lambda (M)$ we always mean that there exists a suitable basis such that this
	graduated overorder of $\Lambda $ has the form $\Lambda(M)$. How to find such a basis 
	is usually indicated in the proofs. 
	\end{remark}
The fact that $\Lambda=\Lambda(M)$ is a ring is equivalent to having, for all $i,j,k\in\{1,\ldots, d\}$, that
\begin{equation}\label{eq:Pd}
    M_{ii} = 0\ \textup{ and }\ M_{ij} + M_{jk} \geq M_{ik}.
\end{equation}
With the {\em polytrope region} $\mathcal{P}_d$ as defined in \cite[Section~4]{EHNSS/21}, we have that \eqref{eq:Pd} is equivalent to the condition $M\in \mathcal{P}_d\cap \ZZ^{d\times d}$. Putting 
\begin{align*}
  {\mathcal P}_d (\ZZ )
   = \{ M \in \ZZ ^{d\times d}\mid M_{ii} = 0, \ M_{ij} + M_{jk} \geq M_{ik}  \mbox{ for all } i,j,k =1,\ldots ,d \},  
\end{align*}
we can see that \eqref{eq:Pd} is equivalent to $M\in\mathcal{P}_d(\ZZ)$.

\begin{remark}\label{exponentvector} 
		For $M\in {\mathcal P}_d(\ZZ)$, the 
 $\Lambda (M)$-lattices $L$ are of the form
		$L= \bigoplus _{i=1}^d \epsilon _i L $ and hence 
		there exists $u=(u_1,\ldots , u_d)\in \ZZ^d$ such that 
		\[
		L = L_u := {\mathcal O}_K \pi^{u_1} e_1 \oplus \ldots \oplus {\mathcal O}_K \pi ^{u_d}  e_d .
		\]
		The  tuple $u$ is called the
		{\em exponent vector} of the lattice $L$. Moreover, 
		$L=L_u$ is a $\Lambda(M)$-lattice if and only if, for any choice of $1\leq i,j\leq d$, one has 
		$M_{ij}+u_j \geq u_i$ and two $\Lambda(M)$-lattices $L_u$ and $L_v$ are isomorphic if and only if $u-v\in \ZZ{\bf 1}$.
		Put 
		\begin{align*}
		Q_M:= \{[u]\in \RR^d/\RR{\bf 1} \mid M_{ij}+u_j \geq u_i \mbox{ for all } i,j =1,\ldots ,d\}.
		\end{align*}
		Then $Q_M$ is a  {\em polytrope} and the integral points
		of $Q_M$ parametrize the $\Lambda(M)$-stable lattices in $K^d$.
	\end{remark}

\subsection{Buildings and apartments}

In line with the content of this paper, we define the affine building of $\SL_d(K)$ via its lattice class model and refer the interested reader to \cite{AbramenkoBrown} for the more general description.

\begin{definition}\label{def:building}
The \emph{affine building} $\Bcal _d(K)$ is an infinite simplicial
complex such that  
\begin{enumerate}[label=$(\arabic*)$]
	\item the vertex set is $\mathcal{B}_d^0(K)=\{ [L] \mid L \mbox{ is an $\Ocal _K$-lattice in } K^{d} \}. $
    \item $\{ [L_1], \ldots , [L_s] \} $ is a simplex
in $\Bcal _d(K)$ if and only if, up to permutation of the indices and choice of representatives, one has $L_1 \supset L_2 \supset \cdots \supset L_s \supset \pi L_1$.
\end{enumerate}
An {\em apartment} of $\mathcal{B}_d(K)$ is any subset $\mathcal{A}(E)$ of $\mathcal{B}^0_d(K)$ of the form 
$${\mathcal A}(E) := \{ [L_u] \mid 
u=(u_1,\ldots , u_d) \in \ZZ ^d \}$$
where $E$ is a frame $K^d=\epsilon _1 K^d \oplus \ldots \oplus \epsilon _d K^d=Ke_1\oplus\ldots\oplus Ke_d$ of $K^d$.
\end{definition}

Strictly speaking, ${\mathcal A}(E)$ consists only of the set 
of 0-simplices in the apartment.
Note that, whereas the exponent vector $u$ depends 
on the choice of the basis $(e_1,\ldots ,e_d)$, the whole 
apartment ${\mathcal A}(E) $ only depends on the frame 
or the corresponding set $\{ \epsilon_1,\ldots , \epsilon _d \}$ 
of orthogonal primitive  idempotents. 
An explicit choice of the basis gives an identification of 
the lattice classes  $[L_u] \in {\mathcal A}(E)$ 
with the integral points $[u] \in \ZZ^d/ \ZZ {\bf 1} $
in the $(d-1)$-dimensional space $ \RR ^d /\RR {\bf 1}$.

 \subsection{Closed orders} \label{sec:closed} 


	\begin{definition}
		Let $\Lambda $ be an order in $K^{d\times d}$.
		Then 
		$$Q(\Lambda ) := \{ [L ] \in {\mathcal B}^0_d(K) \mid \Lambda L = L \} $$
denotes the set of homothety classes of 
		$\Lambda $-lattices in $K^d$.
	The order $\Lambda $ is called 
		{\em closed} if 
\[
\Lambda =\bigcap_{[L]\in Q(\Lambda)}\End_{\Ocal_K}(L).
\]
\end{definition}

Notice that the closed orders are exactly the ones 
that are determined by their sets of invariant lattices. This is not the case in general as the following example shows. 

\begin{example}\label{ex:nonclosed}
Let $M:={\tiny
\begin{pmatrix}
0 & 0 \\ 1 & 0
\end{pmatrix}}\in\mathcal{P}_d(\ZZ)$.
	Then  $\Lambda (M)$ is a graduated order with $Q(\Lambda(M)) = \{ [L_{(0,1)}], [L_{(0,0)}] \} $. Let $\Lambda := \{ X\in \Lambda(M) \mid X_{11} \equiv X_{22} \bmod \pi \} $. Then $\Lambda $ is an order in $K^{2\times 2} $ satisfying $Q(\Lambda) = Q(\Lambda(M) )$. It follows that $\Lambda $ is not closed. 
\end{example}

 \section{The distance, balls and bolytropes} 

In this section, we define a distance on $\mathcal{B}_d^0(K)$ and use it to define balls and bolytropes in the building $\mathcal{B}_d(K)$. Balls are a special type of bolytropes and bolytropes can be thought of as balls ``around polytropes''.

\subsection{The distance} 
The work in this paper heavily depends on the following notion of 
distance on $\mathcal{B}_d^0(K)$. 

\begin{definition}\label{def:dist}
	Let $[L_1], [L_2] \in {\mathcal B}_d^0(K)$ be two 
	homothety classes of lattices. 
	Then 
$$\dist([L_1],[L_2]) := \min \{ s \mid 
	\mbox{ there are } L_1'\in [L_1], L_2' \in [L_2] \mbox{ with } 
	\pi ^{s } L_1' \subseteq L_2' \subseteq L_1' \} .$$
	For a subset ${\mathcal L}  \subseteq {\mathcal B}^0_d(K)$, we put $\dist([L],{\mathcal L} ) := \min \{\dist ([L],[L']) \mid [L']\in\mathcal{L} \}$.
	The set ${\mathcal L}$ is called {\em bounded}, if 
	$\sup \{ \dist ([L],[L'] ) \mid [L], [L'] \in {\mathcal L} \} $
	is finite. 
	\end{definition}

The following result ensures that $\dist$ is in fact a distance.

\begin{lemma}\label{lem:dist-welldef}
    The map $\dist:\mathcal{B}_d^0(K)\times \mathcal{B}_d^0(K)\rightarrow \ZZ$ is a distance on $\mathcal{B}_d^0(K)$. 
\end{lemma}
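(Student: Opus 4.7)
The plan is to verify the four defining properties of a distance: well-definedness (the minimum in the definition is attained and is a nonnegative integer), the identity of indiscernibles, symmetry, and the triangle inequality. The key technical point, used throughout, is that the defining inclusion $\pi^s L_1' \subseteq L_2' \subseteq L_1'$ is invariant under simultaneously scaling $L_1'$ and $L_2'$ by the same power of $\pi$, so one is free to adjust representatives within their homothety classes.

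First I would observe that for any two lattices $L_1, L_2$, there exists $n \in \ZZ$ with $\pi^n L_1 \subseteq L_2$, so replacing $L_2$ with $\pi^{-n} L_2$ if necessary one may arrange $L_2' \subseteq L_1'$ and then further $\pi^s L_1' \subseteq L_2'$ for some $s$; this shows the defining set is nonempty. To see it is bounded below by $0$, I would note that if $\pi^s L_1' \subseteq L_2' \subseteq L_1'$ with $s < 0$, then $L_1' \subseteq \pi^s L_1' \subseteq L_1'$ forces $\pi^s L_1' = L_1'$, which is impossible for $s \neq 0$ (compare lengths of $L_1'/\pi L_1'$). Hence the minimum exists and $\dist([L_1],[L_2]) \in \ZZ_{\geq 0}$. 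The identity of indiscernibles is then immediate: $\dist([L_1],[L_2]) = 0$ exactly when some representatives satisfy $L_1' = L_2'$, i.e.\ $[L_1]=[L_2]$.

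For symmetry, starting from $\pi^s L_1' \subseteq L_2' \subseteq L_1'$ I would multiply through by $\pi^{-s}$ to obtain $L_1' \subseteq \pi^{-s} L_2' \subseteq \pi^{-s} L_1'$. Setting $L_2'' := \pi^{-s} L_2' \in [L_2]$ and $L_1'' := L_1' \in [L_1]$, this rewrites as $\pi^s L_2'' \subseteq L_1'' \subseteq L_2''$, showing $\dist([L_2],[L_1]) \leq s$; swapping the roles gives the reverse inequality.

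For the triangle inequality, given optimal data $\pi^{s_1} L_1' \subseteq L_2' \subseteq L_1'$ and $\pi^{s_2} L_2'' \subseteq L_3' \subseteq L_2''$ with $s_i = \dist([L_i],[L_{i+1}])$, I would use that $L_2' = \pi^n L_2''$ for some $n \in \ZZ$ and replace the second triple $(L_2'', L_3')$ by $(\pi^n L_2'', \pi^n L_3') = (L_2', \pi^n L_3')$, leaving the inclusions intact. Then the two chains share the middle lattice $L_2'$, and concatenating gives $\pi^{s_1+s_2} L_1' \subseteq \pi^{s_2} L_2' \subseteq \pi^n L_3' \subseteq L_2' \subseteq L_1'$, which witnesses $\dist([L_1],[L_3]) \leq s_1+s_2$. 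The only minor subtlety — and the step I would be most careful about — is bookkeeping the homothety scalings to make sure the middle representative is consistent across both pairs; everything else is a direct matter of chaining the $\Ocal_K$-module inclusions.
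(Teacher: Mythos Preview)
Your proposal is correct and follows essentially the same approach as the paper: verify identity of indiscernibles directly, obtain symmetry by rescaling the given chain of inclusions, and prove the triangle inequality by adjusting the representative of the middle class so that two witnessing chains can be concatenated. The only difference is that you spell out the nonnegativity and existence of the minimum, which the paper leaves implicit.
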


\begin{proof}
We check that the defining properties of a distance hold. For this, let $[L_1],[L_2]\in\mathcal{B}_d^0(K)$ with $\dist([L_1],[L_2])=s$ and let $L_1',L_2'$ be as in \Cref{def:dist}. Then
	\begin{enumerate}[label=$(\arabic*)$]
		\item $\dist ([L_1],[L_2]) = 0 $ if and only $L_1' \subseteq L_2' \subseteq L_1'$, equivalently $[L_1]=[L_2]$. 
		\item If $\pi ^{s } L_1' \subseteq L_2' \subseteq L_1 $ then 
			$\pi ^{s } L_2' \subseteq \pi ^{s} L_1' \subseteq L_2' $, 
			so $\dist([L_1],[L_2]) = \dist ([L_2],[L_1]) $. 
		\item Let $[L_3]\in\mathcal{B}_d^0(K)$ and set  $s'=\dist([L_2],[L_3])$. Let, moreover $L_3'\in [L_3]$ and $L_2''\in[L_2]$ be such that $\pi^{s'}L_2''\subseteq L_3'\subseteq L_2''$. Write $L_2''=\pi^tL_2'$. Then 
		\[
		\pi^tL_1'\supseteq \pi^tL_2'\supseteq L_3'\supseteq \pi^{s'+t}L_2'\supseteq \pi^{s+s'+t}L_1'=\pi^{s+s'}(\pi^tL_1'),
		\]
		yielding that 
		$ \dist([L_1],[L_2]) + \dist([L_2],[L_3]) \geq \dist ([L_1],[L_3])$.
	\end{enumerate}
	The choices of $[L_1], [L_2], [L_3]$ being arbitrary, the proof is complete.
\end{proof}

Thanks to the elementary divisor theorem for modules over PIDs, we know that any two lattices in $K^d$ have compatible bases, i.e.\ 
for any two lattice classes $[L_1]$ and $[L_2]$, there is always an apartment containing both.
So, to compute their distance, we may choose a frame basis $(e_1,\ldots ,e_d)$ of 
$K^d$, so that $L_1 = L_{(0,\ldots , 0)}$ and $L_2= L_{(u_1,\ldots , u_d )}$ with 
$u_1\geq \ldots \geq u_d$. With this choice, we obtain that $\dist ([L_1],[L_2]) = u_1-u_d $.

\begin{remark} 
	The distance between lattice classes $[L_u]$ and 
	$[L_v]$ in the same apartment $\mathcal{A}(E)$ is given by
	$$\dist([L_u],[L_v])=\max _{1\leq i\leq d} (v_i-u_i) - \min _{1\leq j\leq d} (v_j-u_j) .$$
In particular, any bounded subset of an apartment is finite. For a connection to tropical geometry, see for instance \cite[Section 5.3]{Jos22}.
\end{remark}

	Note that the distance from Definition \ref{def:dist} 
coincides with the $1$-skeleton distance on $\mathcal{B}_d^0(K)$, as the following result shows. For $L$ and $L'$ lattices with $\pi L\subset L'\subset L$, write $([L],[L'])$ for the $1$-simplex with ends $[L]$ and $[L']$. 

\begin{lemma}
Let $[L_1],[L_2]\in\mathcal{B}_0^d(K)$ be distinct and set $s=\dist([L_1],[L_2])$.  Then $s>0$,
\begin{enumerate}[label=$(\arabic*)$]
    \item 
 there exist $[L_1] =[X_0],[X_1] \ldots, [X_{s-1}], [X_s] = [L_2] \in\mathcal{B}_d^0(K)$ such that   
$ ([X_{i-1}],[X_i]) $ are $1$-simplices for all $1\leq i\leq s$, and 
    \item there is no shorter sequence connecting $[L_1]$ and $[L_2]$ in the 1-skeleton of ${\mathcal B}_d(K)$.
\end{enumerate}
\end{lemma}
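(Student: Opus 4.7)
The observation $s > 0$ is immediate from property (1) of $\dist$ proved in \Cref{lem:dist-welldef}, since $[L_1] \neq [L_2]$. For (1) and (2), the plan is to reduce to a common apartment, construct an explicit length-$s$ chain of $1$-simplices, and derive optimality from the triangle inequality.

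For (1), I would invoke the elementary divisor theorem (as in the discussion preceding the lemma) to choose a frame basis and representatives so that $L_1 = L_{(0,\ldots,0)}$ and $L_2 = L_{(u_1,\ldots,u_d)}$ with $u_1 \geq \ldots \geq u_d$; after rescaling, I may assume $u_d = 0$, so that $u_1 = s$. The natural candidates for the intermediate vertices are then the ``truncations''
\[
X_k := L_{v^{(k)}}, \qquad v^{(k)}_i := \min(u_i, k), \qquad 0 \leq k \leq s,
\]
so that $X_0 = L_1$ and $X_s = L_2$. The main verification is that each pair $\{[X_{k-1}],[X_k]\}$ is indeed a $1$-simplex of $\Bcal_d(K)$, i.e.\ that the inclusions $\pi X_{k-1} \subseteq X_k \subseteq X_{k-1}$ are both \emph{strict}. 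The right strictness follows from the coordinate $i=1$, where $u_1 = s \geq k$ forces $v^{(k)}_1 - v^{(k-1)}_1 = 1$; the left strictness follows from $i=d$, where $u_d = 0 < k$ forces $v^{(k)}_d = v^{(k-1)}_d$, so that the $d$-th coordinate of $\pi X_{k-1}$ strictly exceeds that of $X_k$.

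For (2), I would use that any $1$-simplex $\{[X],[X']\}$ satisfies $\dist([X],[X']) = 1$: the defining strict inclusions give $\dist \leq 1$, while $[X] \neq [X']$ excludes $0$. Applying the triangle inequality of \Cref{lem:dist-welldef} iteratively along any chain of $1$-simplices from $[L_1]$ to $[L_2]$ of length $\ell$ then yields $s = \dist([L_1],[L_2]) \leq \ell$, which is the required lower bound. I expect the main obstacle to be the construction in (1), specifically the strictness checks ensuring that the candidate pairs really are $1$-simplices of $\Bcal_d(K)$ in the sense of \Cref{def:building}; (2) is essentially formal once the distance across a single $1$-simplex is pinned down to be exactly $1$.
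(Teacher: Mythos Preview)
Your proposal is correct and, once the coordinates are unwound, coincides with the paper's proof: your lattice $X_k = L_{v^{(k)}}$ with $v^{(k)}_i = \min(u_i,k)$ is exactly $\pi^k L_1 + L_2$, which is the intermediate lattice the paper uses, and your argument for (2) via the triangle inequality is the same as the paper's. Your treatment is in fact slightly more careful, since you explicitly verify the strictness of the inclusions needed for $([X_{k-1}],[X_k])$ to be a genuine $1$-simplex, a point the paper leaves implicit.
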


\begin{proof}
The number $s$ is positive as a consequence of \Cref{lem:dist-welldef}. Without loss of generality, assume that $\pi^s L_1 \subseteq L_2 \subseteq L_1$ and put $X_1:= \pi L_1 + L_2 $. Then $\pi L_1 \subseteq X_1 \subseteq L_1 $ and so 
	$([L_1],[X_1])$ is a 1-simplex in ${\mathcal B}_d(K)$. 
	For $i= 2, \ldots ,s$, put $X_i := \pi X_{i-1} + L_2 = \pi ^{i} L_1 + L_2$. Then $X_s=L_2$ and all $([X_{i-1}],[X_i])$
	are $1$-simplices in the building. We have proven (1), while  (2) follows from the triangle inequality 
	and the fact that  two lattice classes in a $1$-simplex have distance at most $1$.
\end{proof}

\subsection{Balls and bolytropes}

	\begin{definition} \label{def:ball} 
		Let ${\mathcal L} $ be a bounded 
		subset of ${\mathcal B}_d^0(K)$. Then  the {\em closed ball of radius $r$ and center ${\mathcal L}$} is
		$$\B_r({\mathcal L}) := \{ [L] \in {\mathcal B}_d^0(K) \mid 
		\dist ([L],{\mathcal L} ) \leq r \} .$$ 
		If ${\mathcal L} = \{ [L] \} $ consists of one element only, then 
		$$ \B_r([L]) := \B_r({\mathcal L})$$ 
		is the {\em ball with center $[L]$ and radius $r$}.
		If ${\mathcal L} = Q(\Lambda (M))$, then 
		$$\B_r(M):= \B_r (Q(\Lambda (M)))$$ is called the {\em bolytrope with
		center $Q(\Lambda(M))$} and radius $r$. 
	\end{definition}

	In particular, the ball $\B_r([L])$ consists of all lattice classes $[L']$
	that are represented by some lattice $L'$ such that $\pi ^r L \subseteq L' \subseteq L$. We close the section by computing the intersection of a bolytrope with an apartment. Recall that  $J_d \in {\mathcal P}_d(\ZZ ) $ is the matrix with all $1$s outside of the main diagonal.

\begin{lemma} \label{ballapp} 
	Let ${\mathcal A}$ be an apartment containing $Q(\Lambda(M))$. Then  $$\B_r(M) \cap {\mathcal A} = Q(\Lambda({M+rJ_d})).$$
\end{lemma}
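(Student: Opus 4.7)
The plan is to reduce the claimed equality to an elementary equivalence between two systems of linear inequalities on exponent vectors, exploiting the explicit distance formula available inside a single apartment.

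First I fix a frame basis of $K^d$ such that $\mathcal{A}=\mathcal{A}(E)$. Since $\mathcal{A}$ contains $Q(\Lambda(M))$, \Cref{exponentvector} identifies $Q(\Lambda(M))$ with the set of classes $[u]\in\ZZ^d/\ZZ\mathbf{1}$ satisfying $u_i-u_j\leq M_{ij}$ for all $i,j$, and $Q(\Lambda(M+rJ_d))\cap\mathcal{A}$ with the classes $[v]\in\ZZ^d/\ZZ\mathbf{1}$ satisfying $v_i-v_j\leq M_{ij}+r$ for all $i\neq j$ (the case $i=j$ is automatic, as $M_{ii}=0$). The apartment distance formula from the remark following \Cref{lem:dist-welldef} gives $\dist([L_v],[L_u])=\max_i(v_i-u_i)-\min_j(v_j-u_j)$; hence $[L_v]\in \B_r(M)\cap\mathcal{A}$ is equivalent to the existence of $u\in\ZZ^d$ with $u_i-u_j\leq M_{ij}$ and $\max_i(v_i-u_i)-\min_j(v_j-u_j)\leq r$.

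The forward inclusion is then straightforward: for such a witness $u$ and any $i\neq j$, I decompose $v_i-v_j=(v_i-u_i)-(v_j-u_j)+(u_i-u_j)$, bound the first part by $r$ via the distance formula, and the second by $M_{ij}$ via $[u]\in Q_M$. For the reverse inclusion, given $v$ with $v_i-v_j\leq M_{ij}+r$ for all $i\neq j$, the natural witness is the tropical product $u_i:=\min_k(v_k+M_{ik})$. The triangle-type inequalities $M_{ik}+M_{kj}\geq M_{ij}$ defining $\mathcal{P}_d(\ZZ)$ immediately yield $u_i-u_j\leq M_{ij}$, placing $[u]$ in $Q_M$. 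Since $M_{ii}=0$, we also have $u_i\leq v_i$, so $v_i-u_i\geq 0$; meanwhile the hypothesis on $v$ together with $M_{ii}=0$ rewrites as $v_i-v_k-M_{ik}\leq r$ for every $k$, so $v_i-u_i=\max_k(v_i-v_k-M_{ik})\leq r$. Combining, $\max_i(v_i-u_i)-\min_i(v_i-u_i)\leq r-0=r$, which is the required distance bound.

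The main obstacle is selecting the correct witness $u$ for the $(\supseteq)$ direction: the tropical formula $u_i=\min_k(v_k+M_{ik})$ is the key point, as it causes both the polytrope membership $[u]\in Q_M$ and the distance bound $\dist([L_v],[L_u])\leq r$ to fall out of the same family of inequalities $M_{ik}+M_{kj}\geq M_{ij}$ characterising $\mathcal{P}_d(\ZZ)$. Everything else is a direct calculation once the apartment coordinates have been fixed.
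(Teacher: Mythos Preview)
Your proof is correct. The forward inclusion $\B_r(M)\cap\mathcal{A}\subseteq Q(\Lambda(M+rJ_d))$ is essentially the paper's argument, which you phrase directly where the paper argues by contradiction; the decomposition $v_i-v_j=(v_i-u_i)-(v_j-u_j)+(u_i-u_j)$ is the same in both.

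For the reverse inclusion the paper takes a different route: it observes the order inclusions $\pi^r\Lambda(M)\subseteq\Lambda(M+rJ_d)\subseteq\Lambda(M)$ and deduces in one line that every $\Lambda(M+rJ_d)$-lattice $L_v$ satisfies $\pi^r(\Lambda(M)L_v)\subseteq L_v\subseteq\Lambda(M)L_v$, so the witness is $[\Lambda(M)L_v]\in Q(\Lambda(M))$. Your explicit tropical witness $u_i=\min_k(v_k+M_{ik})$ is in fact the exponent vector of $\Lambda(M)L_v$, so the two arguments produce the same lattice; the paper's version hides the computation inside module multiplication, while yours unpacks it and verifies the polytrope membership and distance bound by hand. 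The paper's approach is shorter and more conceptual; yours is more self-contained and makes the connection to tropical geometry (the min-plus product $M\odot v$) visible, which is a nice bonus.
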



\begin{proof}
Let $(e_1,\ldots,e_d)$ be a frame basis defining $\mathcal{A}$ and put $Q=Q(\Lambda (M+rJ_d))$. We will use \Cref{exponentvector} with respect to this basis.
Since $\pi^r \Lambda (M) \subseteq \Lambda (M+rJ_d) \subseteq \Lambda (M)$, we have the inclusion $Q\subseteq \B_r(M) $. Now we show the other inclusion. Let $[L_u]$ in $\mathcal{A}$ be of distance at most $r$ from some lattice $[L_v] \in Q(\Lambda(M))$. Suppose that $[L_u] \not \in Q(\Lambda(M + rJ_d))$. This means that there exist $1 \leq i \neq j \leq d$ such that $u_i - u_j > M_{ij} + r$. However, since $[L_v] \in Q(\Lambda(M))$, we have $v_i - v_j \leq M_{ij}$ and hence $u_i - u_j > v_i - v_j + r$. In other words
\[
(u_i - v_i) - (u_j - v_j) > r, \mbox{ so } 
 \dist ([L_u] , [L_v ]) > r .
\]
This is a contradiction and so the proof is complete. 
\end{proof}

\subsection{Plesken-Zassenhaus closed sets.}

We have seen that closed orders are determined by the collection of their stable lattices; such sets are thus of fundamental importance for the study of closed orders.

\begin{definition}
	A subset ${\mathcal L} $ of ${\mathcal B}_d^0(K)$ is called 
	\emph{$\PZ$-closed} if ${\mathcal L} = Q(\Lambda )$ for some order
	$\Lambda $. 
\end{definition}

For the study of $\PZ$-closed subsets it clearly 
suffices to consider 
closed orders $\Lambda$. Note that the bijection $\Lambda  \leftrightarrow  Q(\Lambda )$
is a Galois correspondence between 
\[
\{\textup{ closed orders in } K^{d\times d}\  \}
\longleftrightarrow
\{
\textup{ $\PZ$-closed subsets of } \mathcal{B}_d^0(K)\ 
\}.
\]
As shown in \cite{EHNSS/21}  (see also \Cref{polytrope}), 
the $\PZ $-closed subsets of one apartment $\mathcal{A}$ are exactly the finite and convex subsets of $\mathcal{B}_d^0(K)$, i.e.\ the
polytropes. In general, being bounded and convex is a necessary but 
not sufficient condition for a subset of $\mathcal{B}_d^0(K)$ to be closed. 

\begin{proposition}\label{prop:convS}
Let $\Lambda$ be an order in $K^{d\times d}$. Then  $Q(\Lambda)$ is a 
	non-empty bounded convex subset of $\mathcal{B}_d^0(K)$. 
\end{proposition}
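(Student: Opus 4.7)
The plan is to verify the three claims separately---non-emptiness, boundedness, and convexity---each time exploiting that $\Lambda$ is itself an $\mathcal{O}_K$-lattice of full rank in $K^{d\times d}$.

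For non-emptiness I will take any $\mathcal{O}_K$-lattice $L_0 \subset K^d$ and form $L := \Lambda L_0$. Because $\Lambda$ and $L_0$ are both finitely generated over $\mathcal{O}_K$, so is $L$; it is torsion-free (sitting inside $K^d$) and it contains $L_0$ since $\mathrm{Id}_d \in \Lambda$. Over the DVR $\mathcal{O}_K$ this forces $L$ to be a free $\mathcal{O}_K$-module of rank $d$, i.e.\ a lattice, and $\Lambda L = L$ by construction, so $[L] \in Q(\Lambda)$.

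For boundedness I will fix one such stable $[L_0]$ and compare $\Lambda$ with the maximal order $\mathrm{End}_{\mathcal{O}_K}(L_0)$. Both being $\mathcal{O}_K$-lattices of full rank in $K^{d\times d}$, there is some integer $N\ge 0$ such that $\pi^N E_{ij}\in\Lambda$ for all $1\le i,j\le d$, where $E_{ij}$ denotes the matrix unit with respect to a fixed basis of $L_0$. For any $[L]\in Q(\Lambda)$, scaling the representative so that $L\subseteq L_0$ but $L\not\subseteq \pi L_0$, I can pick $v\in L$ with some coordinate $v_{i_0}$ a unit; then $\pi^N E_{i,i_0}\,v = \pi^N v_{i_0} e_i$ lies in $\Lambda L\subseteq L$ for each $i$, so $\pi^N L_0 \subseteq L \subseteq L_0$ and hence $\dist([L],[L_0])\le N$. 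The triangle inequality then bounds the diameter of $Q(\Lambda)$ by $2N$.

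For convexity I will argue apartment by apartment. Fix an apartment $\mathcal{A}$ and a frame basis $(e_1,\ldots,e_d)$ so that $\mathcal{A} = \{[L_u] : u\in\ZZ^d\}$. Writing elements of $\Lambda$ as matrices in this basis, the condition $XL_u\subseteq L_u$ unfolds to $\val(X_{ij})\ge u_i-u_j$ for every $X\in\Lambda$ and every $(i,j)$. The image of the projection $\Lambda\to K,\ X\mapsto X_{ij}$, is a nonzero, finitely generated $\mathcal{O}_K$-submodule of $K$, hence a fractional ideal $\pi^{M_{ij}}\mathcal{O}_K$ for some $M_{ij}\in\ZZ$. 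The stability condition then reduces to the finite system $u_i - u_j \le M_{ij}$, which cuts out a polytrope (hence a convex set) in $\RR^d/\RR\mathbf{1}$; so $Q(\Lambda)\cap\mathcal{A}$ is convex for every apartment.

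The main obstacle is obtaining a \emph{uniform} distance bound in step two: a priori the comparison between $L$ and $L_0$ could depend on $L$. This is resolved by fixing the reference $[L_0]$ once and for all and using that $\Lambda$ already contains $\pi^N\mathrm{End}_{\mathcal{O}_K}(L_0)$, so that a single unit coordinate of some $v\in L$ lets the matrix units drag all of $\pi^N L_0$ into $L$; the integer $N$ then depends only on $\Lambda$.
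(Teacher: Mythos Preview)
Your proof is correct. All three parts are sound; in particular, the convexity step is fine because the coordinate projection $X\mapsto X_{ij}$ is $\mathcal{O}_K$-linear with nonzero image (full rank of $\Lambda$), hence lands in some $\pi^{M_{ij}}\mathcal{O}_K$, and the resulting inequalities $u_i-u_j\le M_{ij}$ cut out a convex region.

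The paper's argument is organized a little differently. For non-emptiness and boundedness it invokes the existence of a maximal overorder $\Gamma\supseteq\Lambda$ and the inclusion $\pi^r\Gamma\subseteq\Lambda\subseteq\Gamma$; your $\Lambda L_0$ construction is a more hands-on substitute that avoids quoting that every order sits in a maximal one (and in fact your $\End_{\mathcal{O}_K}(L_0)$ for the stable $L_0$ \emph{is} such a $\Gamma$). For convexity the paper argues pairwise: given $[L'],[L'']\in Q(\Lambda)$ it forms the graduated overorder $\Gamma'=\End_{\mathcal{O}_K}(L')\cap\End_{\mathcal{O}_K}(L'')\supseteq\Lambda$ and notes that the polytrope $Q(\Gamma')\subseteq Q(\Lambda)$ already contains both classes. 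Your route instead fixes an apartment and shows directly that $Q(\Lambda)\cap\mathcal{A}$ is (the set of integral points of) a polytrope; this is essentially the content of the Remark immediately following the proposition in the paper, so you are proving that remark en route. The paper's version makes the convex hull of two points visible as $Q(\Gamma')$, while yours gives the explicit exponent matrix $M$ governing $Q(\Lambda)\cap\mathcal{A}$; both reduce convexity to the fact that polytropes are convex.
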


\begin{proof} 
As any order is contained in a maximal order, 	there is some 	maximal order $\Gamma $, with 	$\Lambda \subseteq \Gamma $.
Both lattices $\Lambda $ and $\Gamma $ have full rank in
$K^{d\times d}$, so there is $r\in \ZZ_{\geq 0}$ such that $\pi^r \Gamma \subseteq \Lambda 
        \subseteq \Gamma $.
    If $[L]$ is the unique class of $\Gamma $-lattices, then
    $[L] \in Q(\Lambda )$ and hence $Q(\Lambda)$ is not empty.
 Moreover, all lattice classes in $Q(\Lambda )$ have a representative 
                        between $L$ and $(\pi^r \Gamma ) L = \pi^r L$, so $Q(\Lambda )$ is
                        contained in the ball of radius
                        $r$ around $[L]$. In particular, $Q(\Lambda )$ is
                        bounded.
To see convexity, let $[L'],[L''] \in Q(\Lambda )$.
Then there is an apartment containing both lattice classes, so
        $\Gamma ' := \End_{\Ocal _K}(L') \cap \End_{\Ocal _K}(L'') $ is a graduated order containing $\Lambda $.
        But then the convex set $Q(\Gamma ') \subseteq Q(\Lambda )$ contains both lattice classes $[L']$ and $[L'']$, and, 
  $[L']$ and $[L'']$ being arbitrary, $Q(\Lambda )$ is convex.
\end{proof}	

\begin{remark}
Let $\Lambda$ be an order in $K^{d\times d}$ and let $\mathcal{A}$ be an apartment in $\mathcal{B}_d(K)$ such that $Q(\Lambda ) \cap {\mathcal A} \neq \emptyset $. 
Then $$Q(\Lambda ) \cap {\mathcal A} = Q(\Gamma )$$ 
for a unique graduated overorder $\Gamma $ of 
$\Lambda $.
Indeed, if $\mathcal{A} = {\mathcal A}(E)$ 
and ${\mathcal E}= \{ \epsilon_1,\ldots , \epsilon _d \}$  is the set of projections on the
frame $E$, then there are 
only finitely many maximal overorders of 
$\Lambda $ that contain ${\mathcal E}$. 
Their intersection is the desired graduated order
$\Gamma $. 
\end{remark}

\subsection{The degree of a closed order}

\begin{definition}\label{def:PZ}
Let ${\mathcal L} $ be a bounded subset of ${\mathcal B}_d^0(K)$. The
	{\em Plesken-Zassenhaus order} associated to $\mathcal{L}$ is
	$$\PZ ({\mathcal L}) := \bigcap _{[L]\in {\mathcal L} } \End _{\Ocal _K} (L) .$$ 
\end{definition}

\begin{proposition}
	The Plesken-Zassenhaus order $\PZ({\mathcal L})$ of 
	a bounded subset ${\mathcal L}$ of ${\mathcal B}_d^0(K)$ is an $\Ocal _K$-order in $K^{d\times d}$
\end{proposition}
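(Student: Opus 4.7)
The plan is to verify the three constituent properties directly from the definition of an $\Ocal_K$-order: (i) $\PZ(\mathcal{L})$ is a ring containing $\Id_d$; (ii) $\PZ(\mathcal{L})$ is an $\Ocal_K$-module of full rank $d^2$ in $K^{d\times d}$; and (iii) it is free (i.e.\ a lattice). Property (i) is immediate: each $\End_{\Ocal_K}(L)$ is a ring with $\Id_d$, and the intersection of a family of subrings of $K^{d\times d}$ all containing $\Id_d$ is again such a subring. So the real content is to squeeze $\PZ(\mathcal{L})$ between two full $\Ocal_K$-lattices of rank $d^2$.

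First I would fix any $[L_0]\in\mathcal{L}$ together with a representative $L_0$, and use boundedness of $\mathcal{L}$ to pick $r\in\ZZ_{\geq 0}$ with $\dist([L],[L_0])\leq r$ for every $[L]\in\mathcal{L}$. By \Cref{def:dist} I may then choose, for each $[L]\in\mathcal{L}$, a representative (still called $L$) satisfying
\[
\pi^r L_0 \ \subseteq \ L \ \subseteq \ L_0.
\]
The next step is the key sandwich lemma on endomorphism rings: from $\pi^r L_0\subseteq L\subseteq L_0$ one checks by a one-line calculation that
\[
\pi^{r}\End_{\Ocal_K}(L_0)\ \subseteq\ \End_{\Ocal_K}(L)\ \subseteq\ \pi^{-r}\End_{\Ocal_K}(L_0).
\]
Indeed, $X\in\End_{\Ocal_K}(L_0)$ gives $\pi^r X\cdot L\subseteq \pi^r L_0\subseteq L$ for the left inclusion, and $Y\in\End_{\Ocal_K}(L)$ gives $\pi^r Y\cdot L_0\subseteq Y\cdot L\subseteq L\subseteq L_0$ for the right inclusion.

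Intersecting over all $[L]\in\mathcal{L}$ then produces
\[
\pi^{r}\End_{\Ocal_K}(L_0)\ \subseteq\ \PZ(\mathcal{L})\ \subseteq\ \End_{\Ocal_K}(L_0),
\]
where the upper inclusion uses the single term indexed by $[L_0]$. Since $\End_{\Ocal_K}(L_0)\cong \Ocal_K^{d\times d}$ is a free $\Ocal_K$-module of rank $d^2$, both bounds are $\Ocal_K$-lattices of the same full rank in $K^{d\times d}$. Hence $\PZ(\mathcal{L})$ is a finitely generated torsion-free $\Ocal_K$-module containing a full rank free submodule; over the DVR $\Ocal_K$ this forces $\PZ(\mathcal{L})$ itself to be free of rank $d^2$, which together with (i) finishes the proof.

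The only subtle point, and the ``main obstacle'', is precisely the upper bound on $\End_{\Ocal_K}(L)$ coming out uniform in $[L]\in\mathcal{L}$: without boundedness of $\mathcal{L}$, there is no reason for the intersection to sit inside a finitely generated $\Ocal_K$-module, and an intersection of infinitely many maximal orders in $K^{d\times d}$ could a priori fail to be an $\Ocal_K$-lattice (it would remain a ring, but could drop rank or fail to be finitely generated). Once the uniform sandwich is in place, the Noetherianity of $\Ocal_K$ takes care of both finite generation and freeness automatically.
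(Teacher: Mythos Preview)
Your proof is correct and follows essentially the same approach as the paper: both fix a base lattice $L_0\in\mathcal{L}$, use boundedness to get a radius $r$, and establish the sandwich $\pi^{r}\End_{\Ocal_K}(L_0)\subseteq\PZ(\mathcal{L})\subseteq\End_{\Ocal_K}(L_0)$ to conclude full rank. Your write-up is slightly more explicit about the freeness over a DVR and includes the (unneeded) upper bound $\End_{\Ocal_K}(L)\subseteq\pi^{-r}\End_{\Ocal_K}(L_0)$, but the core argument is identical.
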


\begin{proof}
	Put $\Lambda = \PZ ({\mathcal L})$.
	Then $\Lambda $ is an $\Ocal _K$-module that is closed under multiplication and contains $\Id _d$. 
	It remains to show that $\Lambda $ is of full rank in $K^{d\times d}$. 
	As ${\mathcal L}$ is bounded, there are $[L] \in {\mathcal L}$ and $r\in \ZZ_{\geq 0}$ such that 
	${\mathcal L} \subseteq \B _r([L]) $. For the maximal order $\Gamma  = \End_{\Ocal _K} (L)$ 
	we hence have that $\pi ^r \Gamma  L' \subseteq L' $ for all $[L'] \in {\mathcal L}$.
	So $\pi ^r \Gamma  \subseteq \Lambda \subseteq \Gamma $ and, as $\pi ^r \Gamma $ contains a $K$-basis 
	of $K^{d \times d}$, the same is true for $\Lambda $.
\end{proof}

The following remark illustrates the notions introduced in Section \ref{sec:closed} 
and Definition \ref{def:PZ} for the special case of graduated orders.

\begin{remark} \label{polytrope} (\cite[Proposition 6 and 7, Corollary 9, Theorem 16]{EHNSS/21})
If $M\in {\mathcal P}_d(\ZZ)$, then the graduated order 
 $\Lambda(M)$ is  closed and 
	$$Q(\Lambda (M)) = \{ [L_u ] \mid u\in \ZZ ^d,\  u+\RR{\bf 1} \in Q_M \}$$
is a finite set which we can identify  with the integral points of the polytrope $Q_M$.  
	 Moreover, the projective $\Lambda(M)$-lattices 
	 are given by the columns of $M$ in the following way:  if $M^{(1)},\ldots M^{(d)}$ denote the columns of $M$, then, for each projective $\Lambda(M)$-lattice $L$, there exists $i\in\{1,\ldots,d\}$ such that $L$ is homothetic to
	 $$P_i := \Lambda (M) \epsilon_ i = L_{M^{(i)}}.$$ 
	 The polytrope $Q_M$ is the min-convex hull of
	 the set $\{ M^{(1)}+\RR {\bf 1}, \ldots , M^{(d)}+\RR {\bf 1} \} $ 
	and has  dimension  $\dim(Q_M) = \lvert \{ [P_1], \ldots , [P_d] \}  \rvert  - 1$.
	The order 
	 $\Lambda (M) = \PZ ([P_1], \ldots , [P_d] ) $ 
	 is the Plesken-Zassenhaus order of its projective lattices in
	 $K^d$. 
 \end{remark} 
 
 The next proposition shows that closed orders are always an intersection of finitely many maximal orders. 
 
 \begin{proposition}\label{degfin}
 Let ${\mathcal L} \subseteq {\mathcal B}^0_d(K)$ be bounded and let $\Lambda = \PZ ({\mathcal L})$ denote its Plesken-Zassenhaus order.
 Then there exists a finite subset $\{[L_1],\ldots , [L_n]\}$ of ${\mathcal L}$ such that $\Lambda = \PZ ([L_1],\ldots , [L_n]  )$. 
 \end{proposition}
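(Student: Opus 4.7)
The plan is to use boundedness of $\mathcal{L}$ to squeeze $\Lambda$, and every partial intersection, between two homothetic copies of a single maximal order, producing an ambient $\Ocal_K$-module of finite length; a descending chain condition argument then forces the (directed) intersection defining $\Lambda$ to stabilize at some finite subfamily.

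First I would fix any $[L_0]\in\mathcal{L}$, set $\Gamma:=\End_{\Ocal_K}(L_0)$, and choose $r\in\ZZ_{\geq 0}$ with $\mathcal{L}\subseteq\B_r([L_0])$. For each $[L]\in\mathcal{L}$, a representative can be picked so that $\pi^rL_0\subseteq L\subseteq L_0$, and then a direct chase gives the sandwich
\[
\pi^r\Gamma\ \subseteq\ \End_{\Ocal_K}(L)\ \subseteq\ \pi^{-r}\Gamma,
\]
because $\pi^r\phi$ with $\phi\in\Gamma$ sends $L$ into $\pi^rL_0\subseteq L$, while any $\psi\in\End_{\Ocal_K}(L)$ sends $L_0\subseteq\pi^{-r}L$ into $\pi^{-r}L\subseteq\pi^{-r}L_0$. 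These containments persist under intersection, so every partial intersection $\Lambda_F:=\bigcap_{[L]\in F}\End_{\Ocal_K}(L)$, for $F\subseteq\mathcal{L}$, also lies between $\pi^r\Gamma$ and $\pi^{-r}\Gamma$.

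Next I would observe that $\pi^{-r}\Gamma/\pi^r\Gamma$ is finitely generated over $\Ocal_K/\pi^{2r}\Ocal_K$, a ring whose only ideals are the $(\pi^i)/(\pi^{2r})$ for $0\leq i\leq 2r$ and which is therefore Artinian, so $\pi^{-r}\Gamma/\pi^r\Gamma$ has finite length as an $\Ocal_K$-module. The family $\{\Lambda_F:F\subseteq\mathcal{L}\ \text{finite}\}$ is directed under reverse inclusion via $\Lambda_{F_1}\cap\Lambda_{F_2}=\Lambda_{F_1\cup F_2}$; by the descending chain condition its image in $\pi^{-r}\Gamma/\pi^r\Gamma$, and therefore the family itself, attains a minimum at some finite $F_0=\{[L_1],\ldots,[L_n]\}$. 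Minimality forces $\Lambda_{F_0\cup\{[L]\}}=\Lambda_{F_0}$ for every $[L]\in\mathcal{L}$, hence $\Lambda_{F_0}\subseteq\End_{\Ocal_K}(L)$ for every such $[L]$, so that $\Lambda_{F_0}=\Lambda=\PZ([L_1],\ldots,[L_n])$. The only step with real content is the two-sided sandwich that turns boundedness into a finite-length condition; once that is in place, the DCC mechanism is entirely formal.
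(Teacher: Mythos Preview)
Your proof is correct and takes essentially the same approach as the paper: squeeze the partial intersections inside a finite-length $\Ocal_K$-module and run a descending chain argument. The only cosmetic differences are that the paper uses the tighter upper bound $\Lambda_F\subseteq\Gamma$ (available as soon as $[L_0]\in F$, so one can work in $\Gamma/\pi^r\Gamma$ rather than $\pi^{-r}\Gamma/\pi^r\Gamma$) and phrases the descent as an explicit induction on the composition length of the quotient rather than an abstract DCC on the directed system of finite intersections.
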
 
 
 \begin{proof}
 Choose $[L_1] \in {\mathcal L}$ arbitrarily and put $\Gamma = \End_{\Ocal _K}(L_1)$. 
 As ${\mathcal L}$ is bounded, there is $r\in \ZZ _{\geq 0} $ such that ${\mathcal L} \subseteq \B_r([L_1])$ and so
 $$\pi^r \Gamma \subseteq \Lambda \subseteq \Gamma .$$ 
 In particular, the $\Ocal _K$-module $\Gamma /\Lambda $ has finite composition length (at most the composition length $d^2r$ of $\Gamma /\pi^r\Gamma $).
 We proceed by induction on this composition length. 
 If $\Gamma = \Lambda$ then we are done, otherwise there is some $[L_2]\in {\mathcal L}$ such that $[L_2] \not\in Q(\Gamma )$. 
 Replace $\Gamma $ by $\Gamma \cap \End_{\Ocal _K}(L_2) = \PZ ( [L_1 ], [L_2 ] )$ to decrease the composition length 
 of $\Gamma /\Lambda $. After finitely many steps this process constructs the finite set $\{ [L_1],\ldots , [L_n] \}$ 
 with $\Lambda = \PZ ( [L_1],\ldots , [L_n]  )$.
 \end{proof}
 
For a closed order $\Lambda $, the minimal cardinality of a
set ${\mathcal L}$ 
such that $\Lambda = \PZ ({\mathcal L}) $ is hence an interesting invariant.

\begin{definition} 
	Let $\Lambda $ be a closed order. 
	Then  the \emph{degree} of $\Lambda $ is
	$$\deg(\Lambda ) := \min \{ \lvert  {\mathcal L}  \rvert  - 1 \mid \mathcal{L} \subseteq \mathcal{B}_d^0(K) \textup{ with } 
	\Lambda = \PZ ({\mathcal L}  ) \} .$$
\end{definition} 

Thanks to Proposition \ref{degfin}, any closed order is a finite intersection of maximal orders, so the 
degree of a closed order is always finite. 
The closed orders of degree $0$ are exactly the maximal orders and 
the ones of degree $1$ are certain graduated orders. 
In general, the degree of a 
graduated order $\Lambda (M)$ is equal to $\dim(Q_M)$, cf.\ Remark \ref{polytrope}.
In the coming sections, we will see that, for ball orders and bolytrope orders, the degree is always bounded from above by $d$, cf.\ \Cref{prop:balls,thm:bolystar}, though such bound need not always be sharp, cf.\ \Cref{ex:yassine}.

\section{The radical idealizer process} \label{Radid}

Let $\Lambda $ be an order in $K^{d\times d}$. In this section, we describe the radical idealizer chain of $\Lambda$, a construction that will be at the foundation of the proofs of our main results. 


\begin{definition} 
	Let $\Lambda$ and $L$ be an order and a lattice in $K^{d \times d}$, respectively. 
	\begin{itemize}
	    \item The {\em Jacobson radical} $\Jac(\Lambda )$ of $\Lambda$
is the intersection of all maximal left ideals of $\Lambda $.
\item The {\em idealizer} of $L$ is 
	$\Id(L) := \{ X\in K^{d\times d} \mid X L \subseteq L \mbox{ and } 
	L X \subseteq L \}.$
	\end{itemize}
	 
\end{definition}

\begin{remark}
If $\Lambda$ is an order in $K^{d\times d}$, then	$\Jac (\Lambda ) $ is a  two-sided ideal of $\Lambda $ that contains $\pi \Lambda $. The quotient $\Lambda /\Jac(\Lambda )$ is a semisimple 
	$\Ocal_K/\mathfrak{m}_K$-algebra and, for some $n$, one has
	$\Jac(\Lambda )^n \subseteq \pi \Lambda $. Moreover, $\Jac(\Lambda )$ is the unique pro-nilpotent ideal with semisimple quotient ring. For this and more, see for instance \cite[Chapter~1, Section~6]{Reiner/75}.
\end{remark}

\begin{definition}
Let $\Lambda$ be an order in $K^{d\times d}$. The {\em radical idealizer chain} $(\Omega_i)_{i\geq 0}$ of $\Lambda $ is recursively defined by
\[
\Omega _0 := \Lambda \textup{ and } \Omega_{i+1}=\Id(\Jac(\Omega_i)).
\]
\end{definition}

\begin{remark}\label{piJac}
The radical idealizer chain of an order $\Lambda $
is an ascending finite chain 
$\Omega _0 \subset \Omega_1 \subset 
\ldots 
\subset \Omega_{s}(= \Omega_{s+1}=\ldots )$; cf.\ \cite[Remark~3.8]{NebeSteel}.
Moreover, as $\pi \Lambda \subseteq \Jac(\Lambda )$, we have 
$$\Lambda \subseteq \Omega_1=\Id(\Jac(\Lambda )) \subset \frac{1}{\pi } \Lambda .$$ 
This yields an efficient algorithm to compute the radical idealizer chain 
for orders based on solving linear equations in the residue field; cf.\ \cite{NebeSteel}.
The sets of invariant lattices 
${\mathcal L} _i := Q(\Omega _{i}) $ form a descending chain 
$${\mathcal L} _0 \supset {\mathcal L} _1 \supset 
\ldots  \supset 
{\mathcal L} _{s },$$
where the last element ${\mathcal L}_{s } = Q(\Omega _{s })$  
is known to be a simplex in the building ${\mathcal B}_d(K)$; cf.\ \cite[Theorem~(39.14)]{Reiner/75}.
The length $s \geq 0$ of the radical idealizer chain is 
called the {\em radical idealizer length} of the order $\Lambda $.
\end{remark}


\begin{lemma}\label{lem:distance}
Let $\Lambda $ be an order in $K^{d\times d}$ and put $\Omega_1:=\Id(\Jac(\Lambda))$. 
	Then $$Q(\Omega_1) \subseteq Q(\Lambda ) \subseteq \B_1(Q(\Omega_1 ) ).$$
In particular all lattices in $Q(\Lambda )$ have distance at most one from $Q(\Omega_1)$. 
\end{lemma}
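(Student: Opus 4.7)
The plan is to prove the two inclusions separately, the first being immediate and the second resting on a clever choice of representative.

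For the first inclusion $Q(\Omega_1) \subseteq Q(\Lambda)$, I would simply observe that $\Omega_1$ contains $\Lambda$: indeed, $\Jac(\Lambda)$ is a two-sided ideal of $\Lambda$, so $\Lambda \Jac(\Lambda) \subseteq \Jac(\Lambda)$ and $\Jac(\Lambda)\Lambda \subseteq \Jac(\Lambda)$, which is exactly the condition for $\Lambda \subseteq \Id(\Jac(\Lambda)) = \Omega_1$. Any $\Omega_1$-lattice is then automatically a $\Lambda$-lattice.

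For the second inclusion, take $[L] \in Q(\Lambda)$ and consider the lattice $L' := \Jac(\Lambda)\, L$. I would verify three things. First, $L'$ is indeed an $\Ocal_K$-lattice in $K^d$ (nonzero by Nakayama's lemma applied to $\Jac(\Lambda)$, since $L$ is a finitely generated faithful $\Lambda$-module). Second, $L'$ is $\Omega_1$-stable: by definition of the idealizer, $\Omega_1 \Jac(\Lambda) \subseteq \Jac(\Lambda)$, hence $\Omega_1 L' = \Omega_1 \Jac(\Lambda)\, L \subseteq \Jac(\Lambda)\, L = L'$. So $[L'] \in Q(\Omega_1)$. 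Third, I would establish the sandwich
\[
\pi L \subseteq L' \subseteq L.
\]
The right inclusion is immediate since $\Jac(\Lambda) \subseteq \Lambda$ and $\Lambda L = L$. The left inclusion uses the standard fact, recorded in the remark preceding the lemma, that $\pi \Lambda \subseteq \Jac(\Lambda)$; applying this to $L$ gives $\pi L = \pi \Id_d L \subseteq \Jac(\Lambda)\, L = L'$. By the very definition of the distance, this sandwich yields $\dist([L],[L']) \leq 1$, hence $[L] \in \B_1(Q(\Omega_1))$.

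I do not anticipate a genuine obstacle here; the main subtlety to get right is the direction of the inclusion $\Lambda \subseteq \Omega_1$ (rather than the other way around), which requires correctly reading the two-sided idealizer condition against the two-sided ideal $\Jac(\Lambda)$, and making sure the Nakayama step is justified so that $L' = \Jac(\Lambda)L$ is a genuine lattice rather than zero. Everything else reduces to the inclusion $\pi\Lambda \subseteq \Jac(\Lambda)$ already stated in the preceding remark.
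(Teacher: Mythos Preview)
Your argument is correct and mirrors the paper's proof, with one harmless variation: the paper exhibits $\Omega_1 L$ as the nearby $\Omega_1$-lattice (using $\Lambda \subseteq \Omega_1 \subseteq \pi^{-1}\Lambda$ from \Cref{piJac} to obtain $L \subseteq \Omega_1 L \subseteq \pi^{-1}L$), whereas you exhibit $\Jac(\Lambda)L$ (using the dual inclusion $\pi\Lambda \subseteq \Jac(\Lambda) \subseteq \Lambda$). One small slip: Nakayama's lemma gives $\Jac(\Lambda)L \neq L$, not $\Jac(\Lambda)L \neq 0$; however, your subsequent inclusion $\pi L \subseteq L'$ already forces $L'$ to have full rank, so the lattice claim is secured and the appeal to Nakayama can simply be dropped.
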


\begin{proof} 
As $\Omega _1 \supseteq \Lambda $, 
we know that $Q(\Omega_1 ) \subseteq Q(\Lambda )$ and thus we get
	$Q(\Omega_1) = \{ [\Omega_1 L ] \mid [L] \in Q(\Lambda ) \}$. 
Moreover, by \Cref{piJac}, we have $\Lambda \subseteq \Omega_1 \subseteq \frac{1}{\pi } \Lambda $, 
	so $L \subseteq \Omega_1 L \subseteq \frac{1}{\pi } L $ 
	and hence $\dist([L],[\Omega_1 L]) \leq 1$, for all 
	$[L] \in Q(\Lambda )$.
\end{proof}

\begin{lemma}\label{prop:rad-ideal-graduated}
	Let $M\in {\mathcal P}_d(\ZZ)$.
Then $\Id(\Jac(\Lambda(M+J_d))) = \Lambda (M)$.
\end{lemma}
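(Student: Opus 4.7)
The plan is as follows. Set $N := M+J_d$, which still lies in $\mathcal{P}_d(\ZZ)$ (the diagonal stays zero and adding the off-diagonal $1$s preserves the triangle inequalities). I would first identify the Jacobson radical $\Jac(\Lambda(N))$ explicitly, then compute its idealizer by an entry-wise valuation analysis.

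For the first step, I would argue that
\[
J := \{ X \in K^{d\times d} \mid v(X_{ij}) \geq M_{ij}+1 \textup{ for all } i,j \}
\]
coincides with $\Jac(\Lambda(N))$. This reduces to verifying three points: (i) $J$ is a two-sided ideal of $\Lambda(N)$, which follows from the triangle inequality for $M$; (ii) the quotient $\Lambda(N)/J$ is isomorphic to $(\Ocal_K/\mathfrak{m}_K)^d$, because every off-diagonal entry of $\Lambda(N)$ already lies in $\mathfrak{m}_K^{N_{ij}}=\mathfrak{m}_K^{M_{ij}+1}$ and hence in $J$, while the diagonal entries reduce to the residue field; in particular the quotient is semisimple; and (iii) $J^2 \subseteq \pi\Lambda(N)$, an entry-wise check using the triangle inequality for $M$. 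Combined with $\pi\Lambda(N)\subseteq J$, these imply that the image of $J$ in $\Lambda(N)/\pi\Lambda(N)$ is a nilpotent ideal with semisimple quotient, and thus coincides with the Jacobson radical of that finite-dimensional $\Ocal_K/\mathfrak{m}_K$-algebra; hence $J=\Jac(\Lambda(N))$.

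For the second step, I would translate the two conditions $YJ\subseteq J$ and $JY\subseteq J$ defining $\Id(J)$ into entry-wise valuation inequalities. These read
\[
v(Y_{ab}) \geq \max_j \bigl((M_{aj}+1)-(M_{bj}+1)\bigr) \quad \textup{and} \quad v(Y_{ab}) \geq \max_i \bigl((M_{ib}+1)-(M_{ia}+1)\bigr),
\]
where the constants $+1$ cancel. Using the triangle inequalities $M_{ab}+M_{bj}\geq M_{aj}$ and $M_{ia}+M_{ab}\geq M_{ib}$, both maxes are bounded above by $M_{ab}$; choosing $j=b$ (respectively $i=a$) and using $M_{bb}=M_{aa}=0$ shows the bound is attained. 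Hence $\Id(J)=\{Y\in K^{d\times d} \mid v(Y_{ab})\geq M_{ab} \textup{ for all } a,b\}=\Lambda(M)$, which is the desired equality.

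The main obstacle is step one, the explicit determination of $\Jac(\Lambda(N))$; once that identification is in place, step two is a mechanical exercise in exponent-matrix arithmetic in which the defining inequalities of $\mathcal{P}_d(\ZZ)$ enter in a particularly clean way.
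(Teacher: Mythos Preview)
Your argument is correct. The paper's proof proceeds along the same lines but is considerably more compressed: for the first step it simply cites \cite[Example~23]{EHNSS/21} to obtain $\Jac(\Lambda(M+J_d))=\pi\Lambda(M)$ (which is precisely your lattice $J$), whereas you establish this directly via the nilpotent-ideal-with-semisimple-quotient characterization. Your self-contained argument here is a genuine gain in exposition.

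The approaches diverge more visibly in the second step. Having recognized $J=\pi\Lambda(M)$ as a two-sided ideal generated by the central unit $\pi$ inside $\Lambda(M)$, the paper observes in one line that $\Id(\pi\Lambda(M))=\Lambda(M)$: indeed $\Lambda(M)$ visibly normalizes $\pi\Lambda(M)$, and conversely $Y\cdot\pi\Lambda(M)\subseteq\pi\Lambda(M)$ implies $Y\Lambda(M)\subseteq\Lambda(M)$ and hence $Y=Y\cdot\Id_d\in\Lambda(M)$. Your entry-wise computation via the tropical maxima $\max_j(M_{aj}-M_{bj})$ reaches the same conclusion and has the merit of making the role of the defining inequalities of $\mathcal{P}_d(\ZZ)$ completely explicit, but the principal-ideal shortcut is worth knowing: it bypasses the exponent-matrix arithmetic entirely and is the mechanism the paper reuses later for the radical idealizer chains of ball and bolytrope orders.
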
 

\begin{proof}  
As  $\dim (Q_{M+J_d}) = d-1$, we know by  
 \cite[Example~23]{EHNSS/21}  that 
	the Jacobson radical of $\Lambda({M+J_d})$ is equal to $\pi \Lambda (M) $.
	This is a $2$-sided principal ideal in the order 
	 $\Lambda (M)$, so 
	$\Id (\pi \Lambda (M)) = \Lambda (M)$.
\end{proof}

\begin{example}\label{ex:radIdealizer}

\begin{figure}[h] 
    \centering
    \includegraphics[scale=0.25]{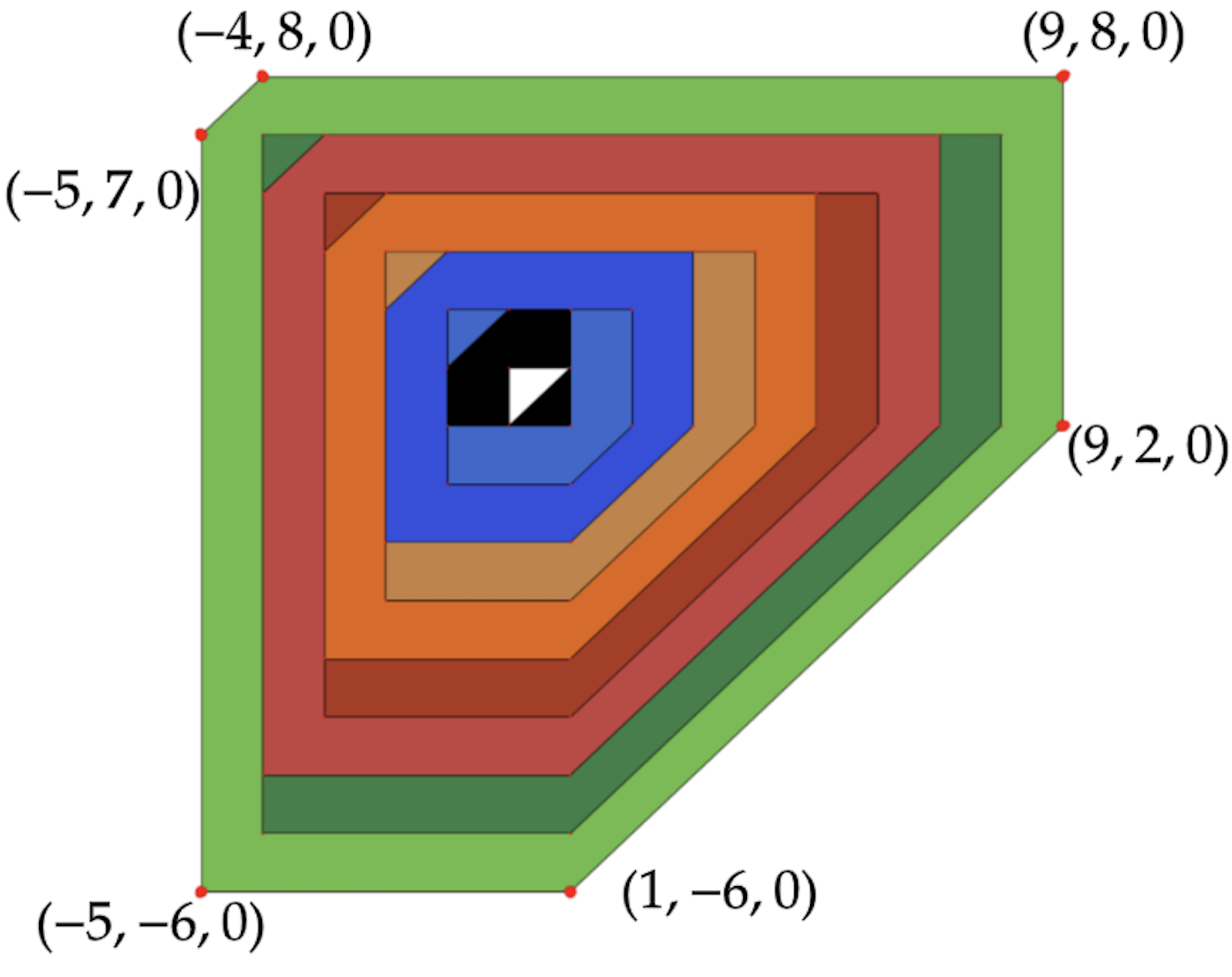}
    \caption{The radical idealizer process for the order $\Lambda(M)$ in \cref{ex:radIdealizer}}\label{fig:radical_idealizer}
\end{figure}

 Consider the configuration of lattice classes $[L_{u_1}], [L_{u_2}]$ and $[L_{u_3}]$ where
 \[
 u_1 = (0,12,5)\sim (-5,7,0), \quad u_2 = (7, 0, 6)\sim(1,-6,0), \quad \text{and } \quad u_3 = (9,8,0).
 \]
 In the notation of \cite{EHNSS/21}, this configuration corresponds to the matrix
 \[
 M = \begin{pmatrix} 0 & 7 & 9 \\ 12 & 0 & 8 \\ 5 & 6 & 0\end{pmatrix}
 \]
 and the decreasing sequence of polytropes $(Q(\Omega_i))_{i \geq 0}$ corresponding to the radical idealizer process for the order $\Lambda(M)$ is depicted in \cref{fig:radical_idealizer}. As expected, the last polytrope (in white) is indeed a simplex.

\end{example}

\section{Ball Orders} \label{ballorders} 

In this section, we define and study a first subfamily of the bolytrope orders, namely closed orders whose set of invariant lattices is a ball in ${\mathcal B}^0_d(K)$. 

\begin{definition}
	A \emph{ball order} in $K^{d\times d}$ is an order of the form $\Beta _r([L]) := \PZ(\B_r([L]))$, where 
	$L$ is a lattice in $K^d$ and $r$ is a non-negative integer.
\end{definition}

\begin{theorem}\label{prop:ball} 
Let $L$ be a lattice in $K^d$ and let $(e_1,\ldots ,e_d)$ be a basis of $L$. Let, moreover, $r$ be a non-negative integer. Then, with respect to $(e_1,\ldots,e_d)$, we have 
	$$\Beta _r([L]) = \{ X \in \Lambda (r J_d) \mid X_{11} \equiv \ldots \equiv X_{dd} \bmod \pi ^r  \} .$$
Moreover, $Q(\Beta_r([L])) = \B_r([L])$ and the ball $\B_r([L])$ is $\PZ $-closed.
\end{theorem}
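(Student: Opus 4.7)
The plan is to show that $\Beta_r([L])$ equals the subring $R := \Ocal_K I_d + \pi^r \Mat_d(\Ocal_K)$ of $\Mat_d(\Ocal_K) = \End_{\Ocal_K}(L)$, written with respect to $(e_1,\ldots,e_d)$; a direct inspection shows that $R$ coincides with $\{X \in \Lambda(rJ_d) \mid X_{11} \equiv \ldots \equiv X_{dd} \bmod \pi^r\}$. Once $\Beta_r([L]) = R$ has been established, the geometric assertion $Q(\Beta_r([L])) = \B_r([L])$, together with the $\PZ$-closedness of $\B_r([L])$, will follow from the containment $\pi^r\Mat_d(\Ocal_K) \subseteq R$.

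For the inclusion $R \subseteq \Beta_r([L])$ I would argue directly: any $[L'] \in \B_r([L])$ admits a representative with $\pi^r L \subseteq L' \subseteq L$, and then for $X = cI_d + \pi^r Y \in R$ one computes $XL' \subseteq cL' + \pi^r Y L \subseteq L' + \pi^r L = L'$. Hence $R$ lies in $\End_{\Ocal_K}(L')$ for every $[L'] \in \B_r([L])$, and intersecting yields $R \subseteq \Beta_r([L])$.

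The reverse inclusion is the heart of the proof, and I would obtain it by varying apartments through $[L]$. Any basis $(f_1,\ldots,f_d)$ of $L$ defines an apartment $\mathcal{A}$ containing $[L]$, and \Cref{ballapp}, applied with the zero exponent matrix (so that the central polytrope reduces to $\{[L]\}$), gives $\B_r([L]) \cap \mathcal{A} = Q(\Lambda(rJ_d))$, where $\Lambda(rJ_d)$ is now written with respect to $(f_1,\ldots,f_d)$. Since this graduated order is closed by \Cref{polytrope}, we get $\Beta_r([L]) = \PZ(\B_r([L])) \subseteq \PZ(\B_r([L]) \cap \mathcal{A}) = \Lambda(rJ_d)$ in every basis of $L$. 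Translated back to $(e_1,\ldots,e_d)$, this says $g^{-1} X g \in \Lambda(rJ_d)$ for every $g \in \GL_d(\Ocal_K)$; thus the image $\overline{X} \in \Mat_d(\Ocal_K/\pi^r\Ocal_K)$ is diagonal in every $\Ocal_K/\pi^r$-basis of $L/\pi^r L$. A short conjugation by elementary matrices $I_d + E_{ij}$ with $i \neq j$ then forces $\overline{X}$ to be a scalar matrix, giving $X \in R$.

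Finally, to identify $Q(\Beta_r([L]))$ with $\B_r([L])$, note that $\B_r([L]) \subseteq Q(\PZ(\B_r([L])))$ holds by definition. Conversely, given $[L'] \in Q(\Beta_r([L]))$, I would choose the representative satisfying $L' \subseteq L$ and $L' \not\subseteq \pi L$. Since $\pi^r\Mat_d(\Ocal_K) \subseteq \Beta_r([L])$ stabilises $L'$, one obtains $\pi^r \Mat_d(\Ocal_K) L' \subseteq L'$; the normalisation ensures $\Mat_d(\Ocal_K) L' = L$ (any vector of $L'$ outside $\pi L$ has a unit coordinate, whose $\Mat_d(\Ocal_K)$-orbit is already all of $L$), and hence $\pi^r L \subseteq L'$, i.e.\ $[L'] \in \B_r([L])$. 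This also establishes the $\PZ$-closedness of $\B_r([L])$. The only genuine obstacle is the scalar-from-diagonal-in-every-basis step in the reverse algebraic inclusion, which is a standard but essential elementary-matrix calculation.
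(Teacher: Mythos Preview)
Your proof is correct and its overall architecture matches the paper's: establish $R \subseteq \Beta_r([L])$ by direct stabilisation, establish $\Beta_r([L]) \subseteq R$ by showing the image modulo $\pi^r$ is scalar, and finally deduce $Q(\Beta_r([L])) = \B_r([L])$ from $\pi^r\Gamma \subseteq \Beta_r([L])$ together with $\Gamma L' = L$ (your normalisation $L'\subseteq L$, $L'\not\subseteq \pi L$ is exactly the paper's ``$\Gamma L' = L$'').

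The one genuine difference is in the reverse inclusion. The paper works directly in $V_r = L/\pi^r L$: since $\Beta_r([L])$ stabilises every lattice between $\pi^r L$ and $L$, its image in $\End(V_r)$ stabilises every submodule of the free $(\Ocal_K/\mathfrak{m}_K^r)$-module $V_r$, and this forces it to be scalar. You instead route through the building machinery, invoking \Cref{ballapp} (with $M=0$) and the closedness of graduated orders from \Cref{polytrope} to get $\Beta_r([L]) \subseteq \Lambda(rJ_d)$ in every basis of $L$, and then finish with the explicit elementary-matrix conjugation. Both arguments are short; the paper's is more self-contained, while yours has the virtue of actually spelling out why ``diagonal in every basis'' implies ``scalar'' over the non-field ring $\Ocal_K/\pi^r$, a step the paper leaves implicit.
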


\begin{proof}
Put $\Lambda=\{ X \in \Lambda (r J_d) \mid X_{11} \equiv \ldots \equiv X_{dd} \bmod \pi ^r  \}$  and $\Gamma = \End_{\Ocal _K}(L) =\Lambda (0) $. 
It follows from the definition of $\Lambda$ that $\pi ^r \Gamma \subseteq \Lambda $. If $L'$ is another lattice such that 
$\pi ^r L \subseteq L' \subseteq L $, then $\pi^r \Gamma L' \subseteq \pi^r \Gamma L = 
\pi^r L \subseteq L'$, which yields 
	$\pi ^r \Gamma \subseteq \Beta _r([L]) $. 
Now the lattice classes at distance at most $r$ from  $[L]$ can be described as submodules of $V_r=L/\pi^rL$. 
	In particular, the image $\overline{\Beta _r([L])}$ of $\Beta_r([L])$ in the endomorphism ring 
	$\End_{\Ocal_K}(V_r)\cong (\Ocal_K/\mathfrak{m}_K^r)^{d\times d}$ is equal to the collection
of all endomorphisms stabilizing every submodule of $V_r$. 
	This ensures that $$
	\overline{\Beta_r([L])}=(\Ocal_K/\mathfrak{m}_K^r)\Id_d = \overline{\Lambda }.$$
	As both orders $\Beta _r([L])$ and $\Lambda $ contain the kernel $\pi ^r \Gamma $
	of the projection $\Gamma\rightarrow\End_{\Ocal_K}(V_r)$, we conclude that $\Lambda = \Beta _r([L])= \PZ (\B_r([L]))$. We now show that $Q(\Beta_r([L]))=\B_r([L])$. To this end, let $[L'] \in Q(\Lambda )$. 
	Then 
	$[\Gamma L'] \in Q(\Gamma ) = \{ [ L ] \} $. Replacing $L'$ by some homothetic 
	lattice we hence may assume that $\Gamma L' = L $. 
	But $\pi ^r \Gamma \subseteq \Lambda \subseteq \End_{\Ocal _K}(L')$ so 
	$\pi^r \Gamma L' = \pi^r L \subseteq L' $ so $[L']\in \B_r([L])$.
\end{proof}

\begin{remark}\label{Ballradid} (Radical idealizer chain of ball orders)
     Let $r$ be a positive integer. Then the Jacobson radical of the ball order
	$\Beta _r([L]) = \PZ (\B_r([L])) $ is
	$\Jac(\Beta _r([L])) = \pi \Beta _{r-1}([L])$,  because
	$\pi \Beta _{r-1}([L])$ is a pro-nilpotent ideal of $\Beta _r([L])$ with
	simple quotient $\Beta _r([L])/ \pi \Beta _{r-1}([L])$ isomorphic to $ \Ocal _K/\mathfrak{m}_K$.
	Now $\pi \Beta _{r-1}([L])$ is a principal 2-sided ideal of
	$\Beta _{r-1}([L])$ so 
	$$\Id (\Jac(\Beta _r([L])))  = \Id (\pi \Beta _{r-1}([L])) = \Beta _{r-1}([L]) $$
        and the radical idealizer chain for ball orders is thus
	$$ \Beta _{r}([L]) \subset \Beta_{r-1}([L]) \subset \ldots \subset \Beta _1([L]) \subset
	\Beta _0([L]) = \End_{\Ocal_K} (L) .$$ The corresponding chain
        of $\PZ $-closed subsets of ${\mathcal B}_d^0(K)$ is
        $$\B_r([L]) \supset \B_{r-1} ([L])  \supset \ldots \supset \B_1([L])
	\supset \B_0([L]) = \{ [L] \} .$$
\end{remark}

The knowledge of the radical idealizer chain of ball orders allows to prove strong properties of ball orders, like the following. 

\begin{proposition}\label{prop:strong-prop-balls}
Let $r$ be a positive integer and $\Lambda $ a closed order in $K^{d\times d}$ such that $\Id(\Jac(\Lambda)) = \Beta _{r-1}([L])$. Then one has 
$\Beta _{r} ([L])\subseteq \Lambda \subseteq \Beta _{r-1}([L])$.
\end{proposition}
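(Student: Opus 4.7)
The plan is to prove the two inclusions separately, with the upper one being essentially formal and the lower one relying on the radical idealizer estimate from \Cref{lem:distance} combined with the description of ball orders in \Cref{prop:ball}.

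For the upper inclusion $\Lambda \subseteq \Beta_{r-1}([L])$, I would argue directly that any order is contained in the idealizer of its Jacobson radical: since $\Jac(\Lambda)$ is a two-sided ideal of $\Lambda$, we have $\Lambda \cdot \Jac(\Lambda) \subseteq \Jac(\Lambda)$ and $\Jac(\Lambda)\cdot \Lambda \subseteq \Jac(\Lambda)$, so $\Lambda \subseteq \Id(\Jac(\Lambda)) = \Beta_{r-1}([L])$ by hypothesis. This is immediate and needs no additional input.

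For the lower inclusion $\Beta_r([L]) \subseteq \Lambda$, the idea is to use the Galois correspondence between closed orders and $\PZ$-closed subsets of $\mathcal{B}_d^0(K)$. Since $\Beta_r([L]) = \PZ(\B_r([L]))$ by definition and $\Lambda = \PZ(Q(\Lambda))$ because $\Lambda$ is closed, it suffices to prove the reverse containment of sets of invariant lattices, namely $Q(\Lambda) \subseteq \B_r([L])$. Applying \Cref{lem:distance} with $\Omega_1 = \Id(\Jac(\Lambda)) = \Beta_{r-1}([L])$ gives $Q(\Lambda) \subseteq \B_1(Q(\Omega_1))$, and \Cref{prop:ball} identifies $Q(\Omega_1) = Q(\Beta_{r-1}([L])) = \B_{r-1}([L])$. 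A one-line application of the triangle inequality for $\dist$ then yields $\B_1(\B_{r-1}([L])) \subseteq \B_r([L])$, completing the chain $Q(\Lambda) \subseteq \B_r([L])$.

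Putting the two halves together produces the claimed sandwich $\Beta_r([L]) \subseteq \Lambda \subseteq \Beta_{r-1}([L])$. No step looks hard; the only point worth highlighting is that one must invoke the hypothesis that $\Lambda$ is \emph{closed} precisely when passing from the set-theoretic inclusion $Q(\Lambda) \subseteq \B_r([L])$ back to the order-theoretic inclusion $\Beta_r([L]) = \PZ(\B_r([L])) \subseteq \PZ(Q(\Lambda)) = \Lambda$, since otherwise $\PZ(Q(\Lambda))$ could strictly contain $\Lambda$ (as illustrated by \Cref{ex:nonclosed}).
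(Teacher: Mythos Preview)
Your proof is correct and follows essentially the same route as the paper: use \Cref{lem:distance} together with \Cref{prop:ball} to obtain $Q(\Lambda)\subseteq \B_r([L])$, and then pass back to orders via the Galois correspondence using that $\Lambda$ is closed. The only cosmetic difference is that for the upper inclusion the paper argues via the set containment $\B_{r-1}([L])=Q(\Omega_1)\subseteq Q(\Lambda)$ from \Cref{lem:distance} and then applies $\PZ$, whereas you observe directly that $\Lambda\subseteq\Id(\Jac(\Lambda))$; both are one-line observations.
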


\begin{proof}
It follows from the hypotheses and the combination of \Cref{lem:distance} with \Cref{prop:ball} that 
$\B_{r-1}([L])\subseteq Q(\Lambda)\subseteq \B_r([L])$. The orders being closed, \Cref{Ballradid} yields that $\Beta _{r} ([L])\subseteq \Lambda \subseteq \Beta _{r-1}([L])$.
\end{proof}


\begin{definition}\label{def:star-config}
Let $r$ be a non-negative integer and $L$ a lattice in $K^d$.
	A {\em star configuration} $\star_r ([L]) $ with center $[L]$ and
	radius $r$ is a set 
	$$\star _r ([L]) = \{ [L_1],\ldots , [L_d], [L_{d+1}] \} $$ 
	such that the following hold:
	\begin{enumerate}[label=$(\arabic*)$]
	    \item $\pi ^r L \subseteq L_1,\ldots , L_{d+1} \subseteq L$,
	    \item for each $i\in\{1,\ldots,d+1\}$, one has $L_i / \pi^r L \cong \Ocal _K/\mathfrak{m}_K^r$,
	    \item for each $i\in\{1,\ldots,d+1\}$, one has $L= \sum _{j\neq i} L_j $. 
	\end{enumerate}
\end{definition}

When $r=1$, i.e.\ when $\Ocal_K/\mathfrak{m}_K^r$ is a field, the $1$-dimensional free $\Ocal _K/\mathfrak{m}_K^r$-modules $L_i/\pi ^r L$ of
$L/\pi^r L$ form a projective basis. In this sense, \Cref{def:star-config} generalizes the definition of a projective basis to modules over rings.

\begin{theorem}\label{prop:balls}
Let $r$ be a non-negative integer and let $L$ be a lattice in $K^d$. Let, moreover, $\star_r([L])$ denote a star configuration 
	with center $[L]$ and radius $r$. 
Then one has 
\[\Beta _r([L]) = \PZ (\star _r([L]) ) \
\textup{ and } \
\deg(\Beta_r([L]))\leq d.\]
\end{theorem}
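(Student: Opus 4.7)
The plan is to prove both equality and the degree bound by reducing everything modulo $\pi^r$ and working inside the finite endomorphism ring $\End_{\Ocal_K}(L/\pi^r L)$, exactly as in the proof of \Cref{prop:ball}. Since every $[L_i]$ in $\star_r([L])$ lies in $\B_r([L])$, one has $\Beta_r([L])=\PZ(\B_r([L]))\subseteq\PZ(\star_r([L]))$; the nontrivial inclusion is the reverse, and once it is established the degree bound $\deg(\Beta_r([L]))\leq(d+1)-1=d$ is immediate from $\lvert\star_r([L])\rvert=d+1$.

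For the reverse inclusion, let $X\in\PZ(\star_r([L]))$ with representatives $\pi^rL\subseteq L_i\subseteq L$. Condition (3) gives $XL\subseteq\sum_{j\neq 1}XL_j\subseteq\sum_{j\neq 1}L_j=L$, so $X\in\End_{\Ocal_K}(L)=\Gamma$. Reducing modulo $\pi^r\Gamma$ (which sits inside $\Beta_r([L])$ by \Cref{prop:ball}), it suffices to show that the image $\overline{X}$ of $X$ in $\End_{\Ocal_K/\mathfrak{m}_K^r}(V_r)$, where $V_r=L/\pi^rL$, is a scalar; for then $X\equiv c\,\Id\pmod{\pi^r\Gamma}$ with some lift $c\in\Ocal_K$, and both $c\,\Id$ and $X-c\,\Id$ lie in $\Beta_r([L])$ by the description of that order in \Cref{prop:ball}. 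Thus the whole problem reduces to the purely module-theoretic claim: any endomorphism of $V_r$ that stabilizes each cyclic free rank-one submodule $\bar{L}_i:=L_i/\pi^rL$ must be a scalar.

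The main obstacle, and the heart of the argument, is this module-theoretic claim over the local (non-field) ring $R:=\Ocal_K/\mathfrak{m}_K^r$. I plan to argue it as follows. Pick a generator $\bar v_i$ of each $\bar{L}_i$; condition (3) says that for each $i$ the elements $\{\bar v_j\}_{j\neq i}$ generate $V_r$ over $R$, so by Nakayama (applied to the residue field quotient $V_r/\mathfrak{m}_KV_r=L/\pi L$) any $d$ of the $\bar v_i$ actually form an $R$-basis of $V_r$. Since $\overline{X}\bar v_i=c_i\bar v_i$ for some $c_i\in R$, writing $\bar v_{d+1}=\sum_{j=1}^d a_j\bar v_j$ and applying $\overline{X}$ yields $a_j(c_j-c_{d+1})=0$ for each $j$. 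The delicate point is showing each $a_j\in R^\times$: if some $a_j\in\mathfrak{m}_KR$, then in the residue quotient $V_1$ the vector $\bar v_{d+1}$ would lie in the span of the $d-1$ vectors $\{\bar v_k:k\neq j,\,k\leq d\}$, contradicting that $\{\bar v_k:k\neq j\}$ is a basis of $V_1$ (which is forced by condition (3) applied with $i=j$ and Nakayama). Hence every $a_j$ is a unit, so $c_1=\ldots=c_{d+1}$, and $\overline{X}$ is a scalar.

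Combining the two inclusions gives $\Beta_r([L])=\PZ(\star_r([L]))$. Since a star configuration has exactly $d+1$ elements, this realizes $\Beta_r([L])$ as the Plesken--Zassenhaus order of a set of cardinality $d+1$, and the definition of degree immediately yields $\deg(\Beta_r([L]))\leq d$.
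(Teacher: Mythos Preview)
Your proof is correct and follows essentially the same approach as the paper's: both reduce modulo $\pi^r$ to the free module $L/\pi^rL$, use condition~(3) of the star configuration to extract a basis from any $d$ of the $\bar v_i$, verify that the coefficients $a_j$ expressing $\bar v_{d+1}$ in that basis are units, and conclude that any endomorphism stabilizing all $\bar L_i$ is scalar. Your write-up is in fact more explicit on two points the paper leaves terse---the verification that $X\in\End_{\Ocal_K}(L)$ via condition~(3), and the Nakayama argument pinning down why each $a_j$ must be a unit---but the underlying strategy is identical.
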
 

\begin{proof}
Write $\Lambda := \PZ (\star_r([L]) )$ and $ \star _r([L]) =: \{ [L_1],\ldots , [L_{d+1}] \} $. Since $\star_r([L])$ has radius $r$, we have that $\star_r([L]) \subseteq  
\B_r([L])$, so $\Lambda \supseteq \Beta _r([L])$.
We now claim that $\Lambda $ stabilizes all lattices 
	$L'$ with $\pi^r L \subseteq L' \subseteq L$.
To this end, write $\overline{L}=L/\pi^rL$ and use the bar notation for the submodules of $\overline{L}$.
For $1\leq i \leq d$ 
	let $e_i \in L_i$ be such that $\overline{\Ocal_Ke_i} = \overline{L_i} $. 
	Since $L_1+\ldots + L_d = L$, the set $\{ \overline{e_1},\ldots , \overline{e_d} \} $ is 
	a basis of the free module $\overline{L}$. 
	So there are $a_i \in \Ocal _K$ such that 
	$\overline{\Ocal_K \sum _{i=1}^d a_i e_i } = \overline{L_{d+1}} $. 
	Since $\star_r([L])$ is a star configuration,
	all $a_i$'s are units, so, replacing $e_i$ by $a_ie_i$, we assume, without loss of generality, that
	$L_{d+1} = \Ocal_K( e_1+\ldots + e_d ) + \pi^r L$.
 Since each $L_i$ is $\Lambda$-stable, 
the image of $\Lambda$ in 
$\End(\overline{L})\cong (\Ocal_K/\mathfrak{m}_K^r)^{d\times d}$ consists of 
	scalar matrices and so all submodules of $\overline{L}$ are stable. This yields the claim and so $\Beta _r([L]) = \PZ (\star _r([L]) )$. The order $\Beta _r([L])$ has degree at most $d$, because a star configuration has cardinality $d+1$.
\end{proof}

The following remark shows that ball orders in $K^{d\times d}$ can have degree smaller than $d$. 

\begin{remark}\label{ex:yassine}
   The degree of  $\Beta_r([\Ocal_K^4])$ is at most $3$, because $\Beta_r([\Ocal_K^4])$ is equal to the Plesken-Zassenhaus order of the following lattices (where the columns of the matrices are the basis elements): 
    \[
    \begin{pmatrix}
        1 \ & 0 \ & 0 & 0 \\
        0 \ & 1 \ & 0 & 0 \\
        1 \ & 0 \ & \pi^r & 0\\
        0 \ & 0 \ & 0 & \pi^r
    \end{pmatrix},  
    \begin{pmatrix}
        \pi^r & 0     & 0 \ & 0 \\
        0     & \pi^r & 0 \ & 0 \\
        0     & 0     & 1 \ & 0\\
        0     & 0     & 0 \ & 1
    \end{pmatrix},  
    \begin{pmatrix}
        1 \ & 0 & 0 & 0 \\
        0 \ & 1 & 0 & 0 \\
        1 \ & 1 & \pi^r & 0\\
        1 \ & 0 & 0 & \pi^r
    \end{pmatrix}, \text{ and }  
    \begin{pmatrix}
        \pi^r & 0 \  & 0     & 0 \\
            0 & 1 \  & 0     & 0 \\
            0 & 0 \  & \pi^r & 0\\
            0 & 1 \  & 0     & \pi^r
    \end{pmatrix}.
    \]
Via change of coordinates, one obtains that any ball order in $K^{4\times 4}$ has degree at most $3$.
\end{remark}

\section{Bolytrope Orders} \label{bolytroporders} 

Let $M\in {\mathcal P}_d(\ZZ)$. 
Recall, from Definition \ref{def:ball}, that the bolytrope 
$\B_r(M) $ is defined to be $\B_r(Q(\Lambda(M)))$. 

\begin{definition}
A \emph{bolytrope order} is an order of the form $\Beta _r(M):=\PZ(\B_r(M))$, where $M$ is an element of $\mathcal{P}_d(\ZZ )$ and $r$ is a non-negative integer.
\end{definition}

Until the end of the present section, fix $M\in\mathcal{P}_d(\ZZ)$ and an apartment $\mathcal{A}$ containing $Q(\Lambda(M))$. Let, moreover, $r$ be a non-negative integer. Then, by Lemma \ref{ballapp}, we have that
	$\B_r(M) \cap {\mathcal A} = Q(\Lambda({M+rJ_d})) $, in particular
        $\Beta _r(M) \subseteq \Lambda (M+rJ_d) $.
Put 
	$$\Lambda _r(M) = \{ X \in \Lambda (M+r J_d) \mid X_{11} \equiv \ldots \equiv X_{dd} \bmod \pi ^r \} .$$
We will show that $\Lambda _r(M) = \Beta _r(M)$ and 
$Q(\Lambda _r(M) ) = \B_r(M)$ is $\PZ $-closed; cf.\ \Cref{th:bolytropes}.

\begin{lemma}\label{lem:bolytrope1} 
Let $[L]$ be a lattice class in $Q(\Lambda(M))$. Then 
	$\Lambda _r(M) = \Lambda (M+rJ_d) \cap \Beta _r([L])$ and $\Lambda_r(M)$ is a closed order.
\end{lemma}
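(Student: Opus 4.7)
The plan is to realize both sides of the equality as explicit intersections of congruence conditions on matrix entries in a common basis, then use that $[L]\in Q(\Lambda(M))$ forces the $\Beta_r([L])$-conditions to be dominated by those of $\Lambda(M+rJ_d)$ off the diagonal.

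First I would fix a frame basis $(e_1,\ldots,e_d)$ of $K^d$ defining the apartment $\mathcal{A}$, and use \Cref{exponentvector} to replace $L$ by its homothetic representative $L_u = \bigoplus_i \Ocal_K \pi^{u_i} e_i$ for some $u\in\ZZ^d$; the assumption $[L_u]\in Q(\Lambda(M))$ translates to $u_i - u_j \leq M_{ij}$ for all $i,j$. In this basis, $\Lambda(M+rJ_d)$ is given by the entry-wise conditions $X_{ii}\in\Ocal_K$ and $X_{ij}\in\mathfrak{m}_K^{M_{ij}+r}$ for $i\neq j$. The ball order $\Beta_r([L_u])$ is described by \Cref{prop:ball} with respect to the basis $f_i := \pi^{u_i}e_i$ of $L_u$; the key computation is to conjugate by $D = \mathrm{diag}(\pi^{u_1},\ldots,\pi^{u_d})$ to rewrite it in the $(e_1,\ldots,e_d)$-basis. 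Since $(D^{-1}XD)_{ij} = \pi^{u_j-u_i}X_{ij}$, the condition $D^{-1}XD\in\Lambda(rJ_d)$ becomes $X_{ii}\in\Ocal_K$ and $X_{ij}\in\mathfrak{m}_K^{r+u_i-u_j}$ for $i\neq j$, while the diagonal congruence $X_{11}\equiv\cdots\equiv X_{dd}\bmod\pi^r$ is preserved since $(D^{-1}XD)_{ii} = X_{ii}$.

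Intersecting the two, the diagonal conditions match ($X_{ii}\in\Ocal_K$ together with the mod-$\pi^r$ congruence), and off the diagonal we get $X_{ij}\in\mathfrak{m}_K^{r+\max(u_i-u_j,\, M_{ij})} = \mathfrak{m}_K^{M_{ij}+r}$, where the last equality uses $u_i - u_j \leq M_{ij}$. This is exactly the defining condition of $\Lambda_r(M)$, yielding $\Lambda_r(M) = \Lambda(M+rJ_d)\cap\Beta_r([L])$.

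For the second claim, I would invoke that $\Lambda(M+rJ_d)$ is closed by \Cref{polytrope} and $\Beta_r([L])$ is closed by \Cref{prop:ball}, together with the general fact that an intersection of closed orders is closed: if $\Gamma_1,\Gamma_2$ are closed and $\Gamma = \Gamma_1\cap\Gamma_2$, then $Q(\Gamma)\supseteq Q(\Gamma_1)\cup Q(\Gamma_2)$ gives $\bigcap_{[L']\in Q(\Gamma)}\End_{\Ocal_K}(L') \subseteq \bigcap_{i=1,2}\bigcap_{[L']\in Q(\Gamma_i)}\End_{\Ocal_K}(L') = \Gamma_1\cap\Gamma_2 = \Gamma$, and the reverse containment follows from the definition of $Q(\Gamma)$. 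I expect the main obstacle to be the coordinate-change step: keeping careful track of exponents under conjugation by $D$ and verifying that the inequality $u_i - u_j \leq M_{ij}$ makes the $\Beta_r([L])$-constraint redundant off the diagonal (so that no spurious entries survive in the intersection).
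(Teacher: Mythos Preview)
Your proof is correct and follows essentially the same approach as the paper's: both compute the intersection explicitly using the description of $\Beta_r([L])$ from \Cref{prop:ball} and observe that the off-diagonal constraint coming from the ball order is dominated by that of $\Lambda(M+rJ_d)$ because $u_i - u_j \leq M_{ij}$. The only difference is cosmetic: the paper streamlines the computation by choosing the frame basis for $\mathcal{A}$ so that $L = \Ocal_K^d$ (equivalently $u = 0$), which forces $M$ to have non-negative entries and makes the intersection immediate without carrying an explicit conjugation by $D$; you keep the original basis and track the exponents through the conjugation instead. The closure argument is also the same in spirit---the paper simply asserts that an intersection of closed orders is closed, while you spell out the short verification.
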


\begin{proof}
Let $(e_1,\ldots ,e_d)$ be a basis of $L$ that is also a frame basis defining the apartment $\mathcal{A}$. Then, with respect to this basis, $[L]=[\Ocal_K^d]$ and thus
 $\Lambda (M) \subseteq \End _{\Ocal _K}(L) = \Ocal _{K}^{d\times d}=\Lambda(0)$. It follows in particular that $M$ has non-negative entries. The explicit description of the ball order in \Cref{prop:ball} allows to deduce that 
  $\Lambda_r(M) = \Lambda (M+rJ_d) \cap \Beta _r([L])$.
  Since $\Lambda(M+rJ_d)$ and $\Beta_r([L])$ are closed orders, then so is $\Lambda_r(M)$.
\end{proof} 

\begin{lemma}\label{lem:bolytrope2} 
One has	$\B_r(M) \subseteq Q(\Lambda_r(M) )$ and  $\Lambda_r(M) \subseteq \Beta _r(M)$. 
\end{lemma}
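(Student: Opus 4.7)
The plan is to establish the first inclusion $\B_r(M)\subseteq Q(\Lambda_r(M))$ directly by a reduction to the ball case, and then deduce the second inclusion as a formal consequence via the Plesken--Zassenhaus correspondence.

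For the first inclusion, I would fix an arbitrary $[L']\in\B_r(M)$. By the very definition of the bolytrope $\B_r(M)=\B_r(Q(\Lambda(M)))$, there exists $[L'']\in Q(\Lambda(M))$ with $\dist([L'],[L''])\leq r$, i.e.\ $[L']\in \B_r([L''])$. Now \Cref{lem:bolytrope1}, applied with $[L]=[L'']$, yields
\[
\Lambda_r(M) \;=\; \Lambda(M+rJ_d)\cap \Beta_r([L''])\;\subseteq\;\Beta_r([L'']).
\]
By \Cref{prop:ball} we have $Q(\Beta_r([L'']))=\B_r([L''])$, so $[L']$ is stable under $\Beta_r([L''])$ and therefore also under $\Lambda_r(M)$. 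Hence $[L']\in Q(\Lambda_r(M))$, proving $\B_r(M)\subseteq Q(\Lambda_r(M))$.

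For the second inclusion, I would use that \Cref{lem:bolytrope1} already guarantees $\Lambda_r(M)$ is a closed order, together with the inclusion just proved. Since every class in $\B_r(M)$ is $\Lambda_r(M)$-stable, we have $\Lambda_r(M)\subseteq \End_{\Ocal_K}(L')$ for every $[L']\in \B_r(M)$, and therefore
\[
\Lambda_r(M) \;\subseteq\; \bigcap_{[L']\in \B_r(M)}\End_{\Ocal_K}(L') \;=\; \PZ(\B_r(M)) \;=\; \Beta_r(M).
\]

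There is no real obstacle here: the content is entirely carried by \Cref{lem:bolytrope1} (which expresses $\Lambda_r(M)$ as an intersection with a ball order) and \Cref{prop:ball} (which identifies the stable lattices of a ball order with the corresponding ball). The only small point that needs care is that \Cref{lem:bolytrope1} can be invoked with the lattice $[L'']$ varying over $Q(\Lambda(M))$, but since $\Lambda_r(M)$ is a fixed order depending only on the apartment $\mathcal{A}$ and the matrix $M+rJ_d$, this is harmless.
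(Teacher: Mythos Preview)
Your proof is correct and follows essentially the same approach as the paper: pick $[L']\in\B_r(M)$, find $[L'']\in Q(\Lambda(M))$ within distance $r$, use \Cref{lem:bolytrope1} to get $\Lambda_r(M)\subseteq\Beta_r([L''])$, and invoke $Q(\Beta_r([L'']))=\B_r([L''])$ to conclude. The only cosmetic differences are that the paper cites \Cref{Ballradid} rather than \Cref{prop:ball} for this last identity (the content is the same), and that you needlessly mention closedness of $\Lambda_r(M)$ for the second inclusion, which follows formally from the first without it.
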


\begin{proof}
We first show that $\B_r(M) \subseteq Q(\Lambda_r(M) )$. For this, let $[L'] \in \B_r(M)$ and let $[L]\in Q(\Lambda (M))$ be such that $\dist([L'],[L]) \leq r$. 
Then the combination of \Cref{Ballradid} and \Cref{lem:bolytrope1} yields that 
\[
[L']\in \B_r([L])=Q(\Beta_r([L]))\subseteq Q(\Lambda_r(M)). 
\]
To conclude, the inclusion $\B_r(M) \subseteq Q(\Lambda_r(M) )$ implies that $\Lambda_r(M) \subseteq \Beta _r(M)$.
\end{proof} 

To prove that $\Beta _r(M) = \Lambda _r(M)$ we use the 
 radical idealizer chain of $\Lambda_r(M)$, which we describe in the following remark. 

\begin{remark} \label{bolyradid} 
Assume that $r\geq 1$. Then, similarly to what is done in \Cref{Ballradid}, one sees that 
	$\Jac(\Lambda _r(M)) = \pi \Lambda _{r-1}(M)$ is a 
	2-sided principal ideal of $\Lambda _{r-1}(M)$ and  hence
	$\Id (\Jac(\Lambda _r(M)))   = \Lambda _{r-1}(M) $.
\end{remark}

\begin{lemma}\label{lem:bolytrope3}
One has	$Q(\Lambda _r(M)) = \B_r(M)$. 
\end{lemma}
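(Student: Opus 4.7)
The easier inclusion $\B_r(M) \subseteq Q(\Lambda_r(M))$ has already been established in \Cref{lem:bolytrope2}, so the plan is to prove the reverse inclusion $Q(\Lambda_r(M)) \subseteq \B_r(M)$ by induction on $r$.

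The base case $r=0$ is immediate: the congruence condition modulo $\pi^0$ is vacuous and $M + 0 \cdot J_d = M$, so $\Lambda_0(M) = \Lambda(M)$, and by \Cref{polytrope} we have $Q(\Lambda_0(M)) = Q(\Lambda(M)) = \B_0(M)$.

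For the inductive step, assume $Q(\Lambda_{r-1}(M)) = \B_{r-1}(M)$ and let $[L'] \in Q(\Lambda_r(M))$. The key input is \Cref{bolyradid}, which tells us that $\Id(\Jac(\Lambda_r(M))) = \Lambda_{r-1}(M)$. Applying \Cref{lem:distance} to the order $\Lambda_r(M)$, we conclude that $[L']$ has distance at most $1$ from some element of $Q(\Lambda_{r-1}(M))$, which by the inductive hypothesis equals $\B_{r-1}(M)$. Pick such an $[L''] \in \B_{r-1}(M)$ with $\dist([L'],[L'']) \leq 1$. By definition of $\B_{r-1}(M)$, there exists $[L] \in Q(\Lambda(M))$ with $\dist([L''],[L]) \leq r-1$. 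The triangle inequality from \Cref{lem:dist-welldef} then yields
\[
\dist([L'],[L]) \leq \dist([L'],[L'']) + \dist([L''],[L]) \leq 1 + (r-1) = r,
\]
so $[L'] \in \B_r(M)$ as required.

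The only delicate point is the invocation of \Cref{bolyradid}, which relies on identifying the Jacobson radical of $\Lambda_r(M)$ as $\pi \Lambda_{r-1}(M)$ and recognizing this as a two-sided principal ideal so that idealizing it returns $\Lambda_{r-1}(M)$; the rest of the argument is a straightforward unwinding of the definition of $\B_r(M)$ as a sublevel set of the distance function to $Q(\Lambda(M))$ combined with the triangle inequality. No separate treatment of the case when $Q(\Lambda(M))$ fails to lie in a single apartment is needed, since the distance bound and the induction both take place intrinsically in $\mathcal{B}_d^0(K)$.
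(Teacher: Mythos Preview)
Your proof is correct and follows essentially the same route as the paper: both use \Cref{lem:bolytrope2} for one inclusion and then induct on $r$ via \Cref{bolyradid} and \Cref{lem:distance} for the other, with your explicit triangle-inequality computation being exactly the content of the paper's chain $Q(\Lambda_r(M)) \subseteq \B_1(\B_{r-1}(M)) \subseteq \B_r(M)$. Your closing remark about $Q(\Lambda(M))$ possibly failing to lie in one apartment is moot, since a graduated order always has its invariant lattices in a single apartment by construction, but this does not affect the argument.
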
 

\begin{proof}
	Lemma \ref{lem:bolytrope2} shows that 
	$\B_r(M) \subseteq Q(\Lambda_r(M) )$. 
	For the opposite inclusion, we rely on
	\Cref{bolyradid} to proceed by induction on $r$. Assume first that $r=0$. Then $Q(\Lambda _0(M)) = Q(\Lambda (M) )=\B_0(M)$ and so we are done. Now assume that $r>0$ and that $Q(\Lambda _{r-1}(M))= \B_{r-1}(M)$. The fact that $\Lambda _{r-1}(M) = \Id (\Jac(\Lambda _r(M))) $  together with
	Lemma \ref{lem:distance} then yields that 
	\[Q(\Lambda_r(M)) \subseteq \B_1(Q(\Lambda_{r-1}(M))) = 
	\B_1(\B_{r-1}(M)) \subseteq \B_r(M) .\]
	This concludes the proof.
\end{proof}

The following is the main result of this section and of the paper. 

\begin{theorem}\label{th:bolytropes}
The following hold: 
\[\Lambda _r(M) = \Beta _r(M)\ \textup{ and } \ Q(\Beta _r(M))) = \B_r(M).\] 
In particular bolytrope orders are closed and 
bolytropes are $\PZ $-closed. 
\end{theorem}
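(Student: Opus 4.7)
The plan is to assemble the theorem from the three lemmas already established in this section: all the substantive work is in place and only a short bookkeeping step is needed. The central observation is that \Cref{lem:bolytrope1} shows $\Lambda_r(M)$ is a closed order, while \Cref{lem:bolytrope3} identifies its set of invariant lattice classes as $\B_r(M)$.

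First, I would use closedness of $\Lambda_r(M)$ to rewrite it as $\Lambda_r(M) = \PZ(Q(\Lambda_r(M)))$. Substituting $Q(\Lambda_r(M)) = \B_r(M)$ from \Cref{lem:bolytrope3} on the right, this becomes
\[
\Lambda_r(M) = \PZ(\B_r(M)) = \Beta_r(M),
\]
which is the first asserted equality. The second equality $Q(\Beta_r(M)) = \B_r(M)$ then follows immediately, since $Q(\Beta_r(M)) = Q(\Lambda_r(M)) = \B_r(M)$. For the two ``in particular'' clauses, I would observe that $\Beta_r(M)$ is closed because it has just been identified with the closed order $\Lambda_r(M)$, and that $\B_r(M)$ is $\PZ$-closed because it equals $Q(\Beta_r(M))$, i.e.\ the set of invariant lattices of the order $\Beta_r(M)$.

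I do not foresee any serious obstacle. The technical heart of the argument has already been carried out, in \Cref{lem:bolytrope3} via the inductive use of the radical idealizer chain from \Cref{bolyradid}, and in \Cref{lem:bolytrope1} by writing $\Lambda_r(M)$ as the intersection of a graduated order with a ball order. The only conceptually subtle point is that closedness of $\Lambda_r(M)$ is precisely what upgrades the containment $\Lambda_r(M) \subseteq \Beta_r(M)$ of \Cref{lem:bolytrope2} to an equality; once this is recognized, the proof reduces to a one-line computation.
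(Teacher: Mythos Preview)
Your proposal is correct and follows essentially the same approach as the paper: the paper's proof is simply a terse two-line version of exactly the argument you spell out, invoking \Cref{lem:bolytrope1} for closedness of $\Lambda_r(M)$ and \Cref{lem:bolytrope3} for $Q(\Lambda_r(M))=\B_r(M)$, from which the equalities follow by the definition of a closed order.
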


\begin{proof}
As a consequence of \Cref{lem:bolytrope1}, both $\Lambda_r(M)$ and $\Beta_r(M)$ are closed orders. We are now done thanks to \Cref{lem:bolytrope3}.
\end{proof}

\begin{corollary}\label{cor:radid-boly}
The beginning of the radical idealizer chain for bolytrope orders is
	$$ \Beta _{r}(M) \subset \Beta_{r-1}(M) \subset \ldots \subset \Beta _1(M) \subset
	\Beta _0(M) = \Lambda (M).$$ 
The first $r+1$ elements in the corresponding chain
        of $\PZ $-closed subsets of ${\mathcal B}_d^0(K)$ are
        $$\B_r(M) \supset \B_{r-1} (M)  \supset \ldots \supset \B_1(M)
        \supset Q(\Lambda(M)).$$
	\end{corollary}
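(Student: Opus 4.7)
The plan is to combine \Cref{th:bolytropes} (which identifies $\Beta_i(M)=\Lambda_i(M)$ and $Q(\Beta_i(M))=\B_i(M)$ for all $0\le i\le r$) with \Cref{bolyradid} (which computes the first radical idealizer step for $\Lambda_i(M)$), and then iterate. More precisely, I will prove by downward induction on $i$ that the $i$-th term of the radical idealizer chain of $\Beta_r(M)$ equals $\Beta_{r-i}(M)=\Lambda_{r-i}(M)$, for $0\le i\le r$.

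The base case $i=0$ is just the definition of the initial term. For the inductive step, assume the $i$-th term equals $\Lambda_{r-i}(M)$ with $r-i\ge 1$. By \Cref{bolyradid}, we have $\Jac(\Lambda_{r-i}(M))=\pi\Lambda_{r-i-1}(M)$, a two-sided principal ideal of $\Lambda_{r-i-1}(M)$, so $\Id(\Jac(\Lambda_{r-i}(M)))=\Lambda_{r-i-1}(M)=\Beta_{r-i-1}(M)$ by \Cref{th:bolytropes}. This gives the claimed chain of orders. The bottom term $\Beta_0(M)=\Lambda_0(M)=\Lambda(M)$ follows directly from the definition of $\Lambda_r(M)$ with $r=0$.

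Applying the set of stable lattices to each order in the chain yields the dual chain of $\PZ$-closed subsets, using $Q(\Beta_i(M))=\B_i(M)$ for $1\le i\le r$ and $Q(\Beta_0(M))=Q(\Lambda(M))$. The strictness of each inclusion in both chains follows from the strict containment $\B_{i-1}(M)\subsetneq \B_i(M)$ (the radius $i-1$ ball around the non-empty polytrope $Q(\Lambda(M))$ is properly contained in the radius $i$ ball), combined with the Galois correspondence between closed orders and $\PZ$-closed sets discussed after \Cref{def:PZ}; alternatively, strictness can be read off directly from the defining formula of $\Lambda_i(M)$ since both the off-diagonal valuation constraints and the diagonal congruence modulo $\pi^i$ become strictly more restrictive as $i$ grows.

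The only subtlety is ensuring that the recursion really produces consecutive terms of the radical idealizer chain rather than skipping any; this is guaranteed because \Cref{bolyradid} identifies $\Id(\Jac(\Lambda_i(M)))$ as exactly $\Lambda_{i-1}(M)$ (not some larger order), so no inductive step is collapsed. Since both the corollary and its main ingredients have already been established, no further computation is required beyond stringing these facts together.
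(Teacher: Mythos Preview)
Your proposal is correct and follows exactly the approach the paper intends: the corollary is stated without proof in the paper because it is an immediate consequence of \Cref{th:bolytropes} (identifying $\Beta_i(M)=\Lambda_i(M)$ and $Q(\Beta_i(M))=\B_i(M)$) together with \Cref{bolyradid} (giving $\Id(\Jac(\Lambda_i(M)))=\Lambda_{i-1}(M)$), and your write-up simply spells out this iteration.
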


Note that $\Lambda (M)$ is the first term in the radical idealizer
	process that is a graduated order. 
The polytrope $Q(\Lambda(M))$ is hence canonically determined by the bolytrope 
$\B_r(M)$ and called the {\em central polytrope} of $\B_r(M)$.

In analogy with ball orders, we 
obtain the following stronger property of bolytrope orders.

\begin{corollary}\label{bolymin}
Assume that $r\geq 1$ and let $\Lambda $ be a closed order in $K^{d\times d}$ such that $\Id(\Jac(\Lambda)) = \Beta _{r-1}(M)$.  Then one has
$\Beta _{r} (M)\subseteq \Lambda \subseteq \Beta _{r-1}(M)$.
\end{corollary}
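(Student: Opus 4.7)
The plan is to mirror, almost verbatim, the proof of the analogous \Cref{prop:strong-prop-balls} for ball orders, replacing ball orders with bolytrope orders throughout and invoking the main theorem \Cref{th:bolytropes} and the companion \Cref{bolyradid} where needed.

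First, I would apply \Cref{lem:distance} to $\Lambda$, which gives the two-sided sandwich
\[
Q(\Id(\Jac(\Lambda)))\subseteq Q(\Lambda)\subseteq \B_1(Q(\Id(\Jac(\Lambda)))).
\]
The hypothesis says $\Id(\Jac(\Lambda))=\Beta_{r-1}(M)$, and \Cref{th:bolytropes} identifies $Q(\Beta_{r-1}(M))=\B_{r-1}(M)$. Substituting, I obtain
\[
\B_{r-1}(M)\subseteq Q(\Lambda)\subseteq \B_1(\B_{r-1}(M)).
\]

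Second, I would reduce the upper bound to $\B_r(M)$ via the triangle inequality: any class within distance $1$ of a class within distance $r-1$ of $Q(\Lambda(M))$ lies within distance $r$ of $Q(\Lambda(M))$, so $\B_1(\B_{r-1}(M))\subseteq \B_r(M)$. This is the very same observation already used inside the proof of \Cref{lem:bolytrope3}, so no new work is needed. Combining the two inclusions yields
\[
\B_{r-1}(M)\subseteq Q(\Lambda)\subseteq \B_r(M).
\]

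Finally, applying $\PZ(\cdot)$ reverses inclusions (larger sets of lattices give smaller intersections of endomorphism rings), so
\[
\Beta_r(M)=\PZ(\B_r(M))\subseteq \PZ(Q(\Lambda))\subseteq \PZ(\B_{r-1}(M))=\Beta_{r-1}(M).
\]
Since $\Lambda$ is closed by assumption, $\PZ(Q(\Lambda))=\Lambda$, and the desired conclusion $\Beta_r(M)\subseteq \Lambda\subseteq \Beta_{r-1}(M)$ follows immediately. There is no real obstacle in the argument: everything is already in place, and the corollary is essentially a packaging of \Cref{lem:distance}, \Cref{th:bolytropes}, and the triangle inequality. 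The only point to be slightly careful about is invoking closedness of $\Lambda$ at the last step to get back from $\PZ(Q(\Lambda))$ to $\Lambda$ itself, which is exactly the role this hypothesis plays in the analogous \Cref{prop:strong-prop-balls}.
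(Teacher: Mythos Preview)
Your proof is correct and follows exactly the approach the paper intends: the paper's own proof consists of the single line ``Analogous to the proof of \Cref{prop:strong-prop-balls},'' and you have faithfully spelled out that analogy, replacing \Cref{prop:ball} by \Cref{th:bolytropes} and making the triangle-inequality step $\B_1(\B_{r-1}(M))\subseteq \B_r(M)$ explicit. The only minor remark is that your mention of \Cref{bolyradid} in the opening paragraph is unnecessary, since you never actually invoke it in the body of the argument.
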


\begin{proof}
Analogous to the proof of \Cref{prop:strong-prop-balls}.
\end{proof}

\begin{theorem}\label{thm:bolystar} 
	Let $[P_1],\ldots , [P_d]$ be the distinct classes of projective 
	$\Lambda (M+rJ_d)$-lattices. 
	Then there is a lattice class $[L_{d+1}] \in \B_r(M)$, such that 
	$$\Beta _r(M) = \PZ ([P_1],\ldots , [P_d], [L_{d+1}] ) .$$
Moreover, the degree of $\Beta_r(M)$ is at most $d$.
\end{theorem}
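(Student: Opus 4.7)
The plan is to construct $[L_{d+1}]$ as an analogue of the apex of a star configuration (\Cref{def:star-config}) for the ambient ball order $\Beta_r([L])$, but taken with respect to a basepoint $[L]$ inside the central polytrope.

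First, I would fix a lattice class $[L] \in Q(\Lambda(M))$ and, as in the proof of \Cref{lem:bolytrope1}, choose a basis $(e_1,\ldots,e_d)$ of $L$ that is also a frame basis of the apartment $\mathcal{A}$ containing $Q(\Lambda(M))$. With respect to this basis $[L]=[\Ocal_K^d]$, so all entries of $M$ are non-negative, and \Cref{polytrope} gives $\Lambda(M+rJ_d) = \PZ([P_1],\ldots,[P_d])$. The candidate is then
\[
L_{d+1} := \Ocal_K(e_1+\cdots+e_d) + \pi^r L,
\]
exactly the ``diagonal'' lattice used in the proof of \Cref{prop:balls}. Since $\pi^r L \subseteq L_{d+1} \subseteq L$, we have $[L_{d+1}] \in \B_r([L]) \subseteq \B_r(M)$, and hence by \Cref{th:bolytropes} the inclusion $\Beta_r(M) \subseteq \End_{\Ocal_K}(L_{d+1})$ holds. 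Combined with $\Beta_r(M)\subseteq \Lambda(M+rJ_d)$ from \Cref{lem:bolytrope1}, this gives one direction: $\Beta_r(M) \subseteq \PZ([P_1],\ldots,[P_d],[L_{d+1}]) = \Lambda(M+rJ_d) \cap \End_{\Ocal_K}(L_{d+1})$.

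The crux is the reverse inclusion. Take $X \in \Lambda(M+rJ_d) \cap \End_{\Ocal_K}(L_{d+1})$. Since the entries of $M+rJ_d$ are non-negative, we have $X \in \Ocal_K^{d\times d}$, so $X$ automatically stabilizes $\pi^r L$ and the condition $XL_{d+1}\subseteq L_{d+1}$ reduces to $X(e_1+\cdots+e_d) \in L_{d+1}$. Writing $X(e_1+\cdots+e_d) = \sum_i \bigl(\sum_j X_{ij}\bigr)e_i$, this lies in $\Ocal_K(e_1+\cdots+e_d)+\pi^r L$ precisely when $\sum_j X_{ij} \bmod \pi^r$ is independent of $i$. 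Because $X_{ij} \in \mathfrak{m}_K^{M_{ij}+r} \subseteq \mathfrak{m}_K^r$ for all $i\neq j$, the sum collapses to $X_{ii}$ modulo $\pi^r$, forcing $X_{11}\equiv \cdots \equiv X_{dd} \pmod{\pi^r}$. This is exactly the defining condition of $\Lambda_r(M) = \Beta_r(M)$ from \Cref{lem:bolytrope1} and \Cref{th:bolytropes}, which closes the equality $\Beta_r(M) = \PZ([P_1],\ldots,[P_d],[L_{d+1}])$.

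The degree bound $\deg(\Beta_r(M)) \leq d$ is then immediate: we have exhibited $\Beta_r(M)$ as the Plesken-Zassenhaus order of $d+1$ lattice classes, so the definition of degree yields the claim. The only subtle point is choosing the right $L_{d+1}$ — one whose stabilizer, when intersected with the graduated overorder $\Lambda(M+rJ_d)$, cuts out precisely the diagonal congruence that separates $\Beta_r(M)$ from $\Lambda(M+rJ_d)$; the computation itself is transparent because membership in $\Lambda(M+rJ_d)$ already places the off-diagonal entries in $\mathfrak{m}_K^r$, so the congruence mod $\pi^r$ collapses onto the diagonal.
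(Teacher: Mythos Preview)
Your proof is correct and uses the same construction of $L_{d+1}$ as the paper. The one difference lies in how you establish the reverse inclusion $\PZ([P_1],\ldots,[P_d],[L_{d+1}])\subseteq \Beta_r(M)$: the paper introduces the auxiliary lattices $L_i=\Ocal_Ke_i+\pi^rL$ for $i=1,\ldots,d$, observes that these lie in $Q(\Lambda(M+rJ_d))$, and then invokes \Cref{prop:balls} on the star configuration $\{[L_1],\ldots,[L_{d+1}]\}$ to land inside $\Beta_r([L])$, after which \Cref{lem:bolytrope1} finishes the job. You instead compute directly that stabilizing $L_{d+1}$ forces the diagonal congruence $X_{11}\equiv\cdots\equiv X_{dd}\pmod{\pi^r}$, using that the off-diagonal entries of any $X\in\Lambda(M+rJ_d)$ already lie in $\mathfrak{m}_K^r$. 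Your route is slightly more self-contained (it does not appeal to \Cref{prop:balls}), while the paper's route is more structural, making explicit that the bolytrope case reduces to the ball case via the star-configuration machinery.
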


\begin{proof}
	As a consequence of \Cref{polytrope}, we have that $\Lambda(M+rJ_d)=\PZ([P_1],\ldots , [P_d])$. In particular, for any lattice class $[L_{d+1}]\in \B_r(M)$, \Cref{lem:bolytrope1,th:bolytropes} imply that
	$$\Beta_r(M) \subseteq 
	\PZ ( [P_1],\ldots , [P_d], [L_{d+1}] ) \subseteq \Lambda (M+rJ_d).$$
	To construct $L_{d+1}$ such that the inclusion $\Beta_r(M) \supseteq 
	\PZ ([P_1],\ldots , [P_d], [L_{d+1}] )$ holds, choose 
	$[L]\in Q(\Lambda(M))$ and a lattice basis $(e_1,\ldots ,e_d)$ of $L$ that is also a 
	frame basis for some apartment containing $Q(\Lambda({M+rJ_d}))$. 
Define $L_{d+1} := \Ocal_K( e_1+\ldots + e_d ) + \pi^r L$ and, 
	for each $i=1,\ldots , d$, put $L_i:= \Ocal_K e_i + \pi^r L \in Q(\Lambda({M+rJ_d}))$. 
	Then $\{[L_1],\ldots , [L_d],[L_{d+1}]\}$ 
	is a star configuration with center $[L]$ and radius $r$. 
	By Theorem \ref{prop:balls}, we thus have 
	$$\PZ( [L_1],\ldots , [L_d],[L_{d+1}] ) = \Beta_r([L]),$$
	which, together with \Cref{lem:bolytrope1} and \Cref{th:bolytropes}, implies that $\PZ ([P_1],\ldots , [P_d], [L_{d+1}] )$ is contained in $ \Lambda (M+rJ_d)\cap \Beta _r([L]) = \Lambda _r(M) = \Beta _r(M)  $. 
\end{proof}

\section{When the building is a tree} \label{d=2}

Throughout this section, assume that $d=2$. Then the  building ${\mathcal B}_2(K)$ is an infinite tree. 
Apartments correspond to infinite paths in the tree and 
the bounded convex subsets of ${\mathcal B}_2(K)$ are the bounded
subtrees. For more on this and other trees, see for instance \cite{SerreTrees}.

The following is the main result of this section, which extends
\cite[Theorem 2]{Tu/11} beyond the case of finite residue fields.

\begin{theorem}  \label{thm:d=2}
	Let $\Lambda$ be a closed order in $K^{2\times 2}$. 
	Then there are $r,m\in \ZZ _{\geq 0}$ such that 
	$$ \Lambda = \Beta _r \left(\begin{pmatrix}
	0 & m \\ 0 & 0
	\end{pmatrix} \right)  =\{ X \in \Ocal_K^{2\times 2} \mid X_{12}\in \mathfrak{m}_K^{m+r}, 
		X_{21} \in \mathfrak{m}_K^r, 
		X_{11} \equiv X_{22} \bmod\pi ^r  \} .$$
\end{theorem}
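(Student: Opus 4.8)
The statement has two parts: first, that every closed order in $K^{2\times 2}$ is a bolytrope order $\Beta_r(M)$ with $M = \begin{psmallmatrix} 0 & m \\ 0 & 0\end{psmallmatrix}$ for suitable $r,m\geq 0$; second, the explicit matrix description of such an order. The second part is really just \Cref{thm:d=2}'s first equality combined with the definition $\Lambda_r(M)=\Beta_r(M)$ from \Cref{th:bolytropes}, unwound for $d=2$: writing $M+rJ_2 = \begin{psmallmatrix} 0 & m+r \\ r & 0\end{psmallmatrix}$, the condition $X\in\Lambda(M+rJ_2)$ reads $X_{12}\in\mathfrak{m}_K^{m+r}$, $X_{21}\in\mathfrak{m}_K^r$, $X_{11},X_{22}\in\Ocal_K$, and then $X\in\Lambda_r(M)$ adds $X_{11}\equiv X_{22}\bmod\pi^r$; so this part is a routine check once the first part is established. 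The real content is the first part.

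\emph{First part.} Let $\Lambda$ be a closed order in $K^{2\times2}$ and let $(\Omega_i)_{i\geq0}$ be its radical idealizer chain, stabilizing at $\Omega_s$. By \Cref{piJac}, $Q(\Omega_s)$ is a simplex in the tree $\mathcal{B}_2(K)$, hence either a single vertex $[L]$ or a single edge $\{[L],[L']\}$ with $\pi L\subset L'\subset L$. In either case $\Omega_s$ is a graduated order: if $Q(\Omega_s)=\{[L]\}$ then $\Omega_s=\End_{\Ocal_K}(L)$ is maximal, and we can pick a frame basis so that $\Omega_s=\Lambda(0)$; if $Q(\Omega_s)$ is an edge, then in a frame basis adapted to the apartment through $[L]$ and $[L']$ we get $\Omega_s=\Lambda(J_2)$ (the non-maximal graduated order of the edge). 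In all cases, after a change of coordinates, $\Omega_s=\Lambda(mJ_2)$ for $m\in\{0,1\}$ — but actually I want the polytrope to be a general segment of length $m$, so let me instead argue from the other end. The cleaner route: set $m := \deg(\Omega_1)$ if $\Omega_1$ is already graduated, or more uniformly, let $\Gamma$ be the \emph{first} term in the chain $(\Omega_i)$ that is a graduated order and let $r$ be its index, so $\Gamma = \Omega_r = \PZ(Q(\Gamma))$ with $Q(\Gamma)$ a polytrope in some apartment $\mathcal{A}$. For $d=2$ every polytrope is a segment (a path in the tree), so in a suitable frame basis $Q(\Gamma)=Q(\Lambda(M))$ with $M=\begin{psmallmatrix}0&m\\0&0\end{psmallmatrix}$, $m\geq0$.

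\emph{Tying it together.} Now I would show by downward induction that $\Omega_{r-j} = \Beta_j(M)$ for $0\leq j\leq r$; the base case $j=0$ is $\Omega_r = \Lambda(M) = \Beta_0(M)$, and the inductive step uses \Cref{bolymin}: since $\Id(\Jac(\Omega_{r-j})) = \Omega_{r-j+1} = \Beta_{j-1}(M)$ by induction, and $\Omega_{r-j}$ is a closed order, \Cref{bolymin} gives $\Beta_j(M)\subseteq\Omega_{r-j}\subseteq\Beta_{j-1}(M)$ — this pins down $\Omega_{r-j}$ only up to these bounds, not exactly, so I need more. The fix: I should run the argument from $\Lambda$ directly. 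The key claim is that $\Lambda$ itself appears in the radical idealizer chain of some bolytrope order, equivalently that $\Lambda=\Beta_r(M)$ where $r$ is the index at which the chain of $\Lambda$ \emph{first becomes graduated}. To nail the exact equality rather than just the sandwich from \Cref{bolymin}, I would use that in $K^{2\times 2}$ a closed order with a given set of invariant lattices is unique (the Galois correspondence), so it suffices to show $Q(\Lambda) = \B_r(M)$. For this I'd track invariant lattices through the chain: $Q(\Omega_{r}) = Q(\Lambda(M))$, and by \Cref{lem:distance} applied at each step, $Q(\Omega_{r-1})\subseteq\B_1(Q(\Lambda(M)))$, and the reverse inclusion because $\Omega_{r-1}\subseteq\Beta_1(M)$ would follow if I knew $\Omega_{r-1}$ stabilizes nothing outside $\B_1(M)$ — which is exactly \Cref{lem:distance}'s first containment $Q(\Omega_r)\subseteq Q(\Omega_{r-1})$ plus a dimension/maximality count specific to $d=2$.

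\emph{Main obstacle.} The delicate point is proving the two lattice classes strictly inside each successive ball are \emph{all} stabilized, i.e.\ that $Q(\Omega_{r-j})\supseteq\B_j(M)$ and not something smaller; equivalently, that the chain of $\Lambda$ does not "skip" and that $\Lambda$ is genuinely a bolytrope order and not merely squeezed between two of them. I expect to resolve this by the $d=2$ tree geometry: balls $\B_j(M)$ around a segment in a tree are themselves subtrees, and a closed order whose invariant set is a subtree containing $Q(\Lambda(M))$ and contained in $\B_r(M)$, with the right radical-idealizer behavior, is forced to have invariant set exactly $\B_r(M)$ by counting — a non-bolytrope closed order in $K^{2\times2}$ would have to "cut" the ball asymmetrically, but the condition $X_{11}\equiv X_{22}\bmod\pi^r$ is the \emph{only} congruence compatible with stabilizing both endpoints of the central segment and a generic point at distance $r$, cf.\ \Cref{ex:nonclosed}. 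I would make this precise by invoking \Cref{thm:bolystar} for $d=2$: $\Beta_r(M)=\PZ([P_1],[P_2],[L_3])$ for the two projective $\Lambda(M+rJ_2)$-lattices and one extra class, and then showing the invariant lattices of \emph{any} closed order with the prescribed radical idealizer chain structure must contain these three classes, hence contain $\B_r(M)$ by the $\PZ$-minimality, hence equal it.
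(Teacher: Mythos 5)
Your proposal takes a genuinely different route from the paper, but it has a gap at its core that you yourself flag and do not close. You try to read off $m$ and $r$ from the radical idealizer chain of $\Lambda$ (take $\Omega_r$ to be the first graduated term, write $\Omega_r = \Lambda(M)$, and then prove $\Omega_{r-j} = \Beta_j(M)$ by downward induction). The sandwich from \Cref{bolymin} only gives $\Beta_j(M)\subseteq\Omega_{r-j}\subseteq\Beta_{j-1}(M)$, which, as you note, does not pin down $\Omega_{r-j}$. The remaining discussion (tracking $Q(\Omega_i)$ through the chain, ``a dimension/maximality count specific to $d=2$'', invoking $\PZ$-minimality via \Cref{thm:bolystar}) is a sketch of what one would like to be true rather than an argument; in particular the crucial inclusion $\B_j(M)\subseteq Q(\Omega_{r-j})$ is never established. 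Nothing in the radical idealizer machinery by itself rules out that $Q(\Omega_{r-1})$ is, say, the segment $Q(\Lambda(M))$ together with a single extra neighbour rather than the full ball $\B_1(M)$; closing that gap \emph{is} the content of the theorem. There are also two subsidiary problems: (i) to apply \Cref{bolymin} at each step you need $\Omega_{r-j}$ to be closed, but only $\Omega_0 = \Lambda$ is assumed closed, and the radical idealizer does not obviously preserve closedness; (ii) identifying ``the first graduated term'' with the central polytrope order is exactly the conclusion you want, so taking it as a starting point is circular unless justified independently.

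The paper's proof avoids the chain entirely and works directly with the geometry of $Q(\Lambda)$ in the tree. One sets $R$ to be the diameter of $Q(\Lambda)$, realized by $[L_1],[L_2]$, whose convex hull $\mathcal{L}$ is a segment in an apartment; then $r$ is the maximal distance from $\mathcal{L}$ achieved by some $[L_3]\in Q(\Lambda)$, with nearest point $[L_0]\in\mathcal{L}$. Choosing coordinates so that $\mathcal{L}= Q(\Lambda(M+rJ_2))$ gives $\Lambda\subseteq\Lambda(M+rJ_2)$, and the tree geometry (maximality of $R$ forces $[L_0]$ to lie at distance at least $r$ from both endpoints of $\mathcal{L}$) produces two further points $[L_1'],[L_2']\in\mathcal{L}$ so that $\{[L_3],[L_1'],[L_2']\}$ is a star configuration of radius $r$ around $[L_0]$; by \Cref{prop:balls} this forces $\Lambda\subseteq\Beta_r([L_0])$. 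Combining via \Cref{lem:bolytrope1} gives $\Lambda\subseteq\Beta_r(M)$, and the extremality of $R$ and $r$ gives $Q(\Lambda)\subseteq\B_r(M)$, so closedness yields equality. This is the step you would need to replicate: some geometric input on $Q(\Lambda)$ that forces the full ball, rather than information that flows purely from the radical structure. If you wish to salvage the chain approach, the honest fix is to first prove (by the extremal-point argument or otherwise) that $Q(\Lambda)$ is a bolytrope, and only then observe that the chain recovers $M$ and $r$; at present the proposal assumes the hard part.
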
 

\begin{proof} 
Put 
$R:=
\max \{ \dist ([L],[L']) \mid [L],[L']\in Q(\Lambda ) \}$
and	let $[L_1],[L_2]\in Q(\Lambda )$ be such that 
	$R = \dist ([L_1],[L_2])$. 
	Then the convex hull 
	$${\mathcal L} = Q(\PZ([L_1],[L_2])) \subseteq Q(\Lambda ) $$
	is a line segment and is hence contained in an apartment ${\mathcal A}$.
Define $$r:=\max\{\dist([L], \mathcal{L}) \lvert  [L]\in Q(\Lambda)\}$$ and
	let $[L_3]\in Q(\Lambda)$ be such that $r=\dist([L_3],{\mathcal L})$.
	Let, moreover, $[L_0] \in {\mathcal L}$ denote the unique 
	lattice class in $\mathcal{L}$ satisfying $\dist([L_3],[L_0]) = r$.

	
	\begin{figure}[ht]
	\begin{center}
		\begin{tikzpicture}[scale=0.5]
		  \draw[fill=black] (-6,0,0) circle (3pt) node[anchor=north]{\footnotesize{$[L_1]$}};	  
		 \draw[fill=black]  (0,0,0) circle (3pt) node[anchor=north]{\footnotesize{$[L_1']$}};
		  \draw[fill=black] (-6,0,0) --node[anchor=south]{} (3,0,0);
		  \draw[fill=black] (3,0,0) circle (3pt) node[anchor=north]{\footnotesize{$[L_0]$}};
		  \draw[fill=black] (3,0,0);
		  \draw[fill=black] (3,0,0) -- node[anchor=west]{\footnotesize{$r$}} (3,3,0) circle (3pt) node[anchor=south]{\footnotesize{$[L_3]$}};	  
		  \draw[fill=black] (3,0,0) --(6,0,0) circle (3pt) node[anchor=north]{\footnotesize{$[L_2']$}};
		  \draw[fill=black] (6,0,0) --(8,0,0) circle (3pt) node[anchor=north]{\footnotesize{$[L_2]$}};
          \draw[color=blue!50,very thick ](3,0,0) circle (3);		    
		\end{tikzpicture}
	\end{center}
    \end{figure} 
	
	Now choose a frame basis $(e_1,e_2)$ for ${\mathcal A}$ such that, with respect to this basis, there exists an integer $m$ such that 
    $[L_1]= [L_{(0,r)}]$ and 
    $[L_2]=[L_{(m+r,0)}]$.
    It follows from the definition of $R$ that $$R + 1 = \lvert {\mathcal L} \rvert  = m + 2r + 1.$$
    With respect to the chosen basis, note now that ${\mathcal L} = Q(\Lambda({M+rJ_2}))$ and hence 
    \[\Lambda \subseteq \Lambda(M+rJ_2).\]
    Moreover, if $[L'_1] $ and $[L'_2] \in {\mathcal L} $ 
		are the two lattice classes at distance $r$ from $[L_0]$ and such that $\dist([L_1'],[L_2'])=2r$, then
		the set $\{[L_3],[L'_1],[L'_2]\} $ is a star configuration
			with radius $r$ and center $[L_0]$.
		As a consequence of the definition of $\mathcal{L}$, such lattice classes $[L_1'],[L_2']$ exist and thus Theorem \ref{prop:balls} ensures that \[\Lambda \subseteq \Beta _r([L_0]).\]
	We have proven that  $\Lambda \subseteq \Beta _r([L_0]) \cap \Lambda(M+rJ_2)$ and so $\Lambda \subseteq \Beta_r(M) $, thanks to \Cref{lem:bolytrope1}. 
	As $Q(\Lambda ) \subseteq \B_r(M) = Q(\Beta _r(M))$,
	we obtain 
	$\Lambda = \Beta_r(M) $ as stated in the theorem.
\end{proof}

\begin{figure}[ht]
\begin{center}
    \tikzset{every picture/.style={line width = 0.4pt}}          
    
    \begin{tikzpicture}[x=0.70pt,y=0.70pt,yscale=-1,xscale=1]
    
     \draw  [fill={rgb, 255:red, 65; green, 117; blue, 5 }  ,fill opacity=1 ] (268.81,195.84) .. controls (268.84,193.52) and (270.62,191.67) .. (272.78,191.7) .. controls (274.94,191.73) and (276.67,193.64) .. (276.65,195.96) .. controls (276.62,198.27) and (274.84,200.13) .. (272.68,200.1) .. controls (270.51,200.07) and (268.78,198.16) .. (268.81,195.84) -- cycle ;
     
     \draw  [fill={rgb, 255:red, 65; green, 117; blue, 5 }  ,fill opacity=1 ] (299.03,174.06) .. controls (299.05,171.74) and (300.83,169.88) .. (303,169.91) .. controls (305.16,169.95) and (306.89,171.85) .. (306.86,174.17) .. controls (306.83,176.49) and (305.05,178.34) .. (302.89,178.31) .. controls (300.73,178.28) and (299,176.38) .. (299.03,174.06) -- cycle ;
     
     \draw  [fill={rgb, 255:red, 65; green, 117; blue, 5 }  ,fill opacity=1 ] (243.01,173.33) .. controls (243.04,171.01) and (244.81,169.15) .. (246.98,169.19) .. controls (249.14,169.22) and (250.87,171.12) .. (250.84,173.44) .. controls (250.81,175.76) and (249.04,177.62) .. (246.87,177.58) .. controls (244.71,177.55) and (242.98,175.65) .. (243.01,173.33) -- cycle ;
     
     \draw  [fill={rgb, 255:red, 65; green, 117; blue, 5 }  ,fill opacity=1 ] (210.57,183.42) .. controls (210.6,181.2) and (212.3,179.43) .. (214.36,179.46) .. controls (216.43,179.49) and (218.08,181.31) .. (218.05,183.52) .. controls (218.03,185.74) and (216.33,187.51) .. (214.26,187.48) .. controls (212.2,187.45) and (210.54,185.63) .. (210.57,183.42) -- cycle ;
 
     \draw   (252.02,261.95) .. controls (252.05,259.64) and (253.82,257.78) .. (255.99,257.81) .. controls (258.15,257.84) and (259.88,259.75) .. (259.85,262.07) .. controls (259.82,264.39) and (258.05,266.24) .. (255.88,266.21) .. controls (253.72,266.18) and (251.99,264.27) .. (252.02,261.95) -- cycle ;
    
     \draw   (286.1,261.53) .. controls (286.13,259.22) and (287.9,257.36) .. (290.07,257.39) .. controls (292.23,257.42) and (293.96,259.33) .. (293.93,261.65) .. controls (293.9,263.97) and (292.13,265.82) .. (289.96,265.79) .. controls (287.8,265.76) and (286.07,263.85) .. (286.1,261.53) -- cycle ;

    \draw [color={rgb, 255:red, 65; green, 117; blue, 5 }  ,draw opacity=1 ][line width=1.5]    (250.01,176.52) -- (269.59,193.32) ;

    \draw [color={rgb, 255:red, 65; green, 117; blue, 5 }  ,draw opacity=1 ][line width=1.5]    (276.14,193.21) -- (299.81,176.58) ;

    \draw [color={rgb, 255:red, 254; green, 19; blue, 19 }  ,draw opacity=1 ][line width=1.5]    (272.68,200.1) -- (272.9,225.53) ;

    \draw [color={rgb, 255:red, 65; green, 117; blue, 5 }  ,draw opacity=1 ][line width=1.5]    (218.05,183.52) -- (243.4,175.85) ;

    \draw    (258.01,258.29) -- (270.43,232.86) ;

    \draw [color={rgb, 255:red, 255; green, 0; blue, 0 }  ,draw opacity=1 ][line width=1.5]    (245.02,170.03) -- (227.06,146.98) ;

    \draw    (287.78,257.87) -- (275.52,232.44) ;

    \draw [color={rgb, 255:red, 255; green, 0; blue, 0 }  ,draw opacity=1 ][line width=1.5]    (303,169.91) -- (310.68,142.63) ;

    \draw   (332.99,112.71) .. controls (331.7,114.59) and (329.22,115.06) .. (327.45,113.76) .. controls (325.68,112.45) and (325.29,109.86) .. (326.58,107.98) .. controls (327.87,106.09) and (330.35,105.63) .. (332.12,106.93) .. controls (333.89,108.24) and (334.28,110.83) .. (332.99,112.71) -- cycle ;

    \draw   (364.57,96.26) .. controls (363.28,98.15) and (360.8,98.62) .. (359.03,97.31) .. controls (357.26,96) and (356.87,93.42) .. (358.16,91.53) .. controls (359.45,89.65) and (361.93,89.18) .. (363.7,90.49) .. controls (365.47,91.79) and (365.86,94.38) .. (364.57,96.26) -- cycle ;

    \draw   (336.2,76.43) .. controls (334.91,78.31) and (332.43,78.78) .. (330.66,77.47) .. controls (328.89,76.16) and (328.5,73.58) .. (329.79,71.69) .. controls (331.08,69.81) and (333.56,69.34) .. (335.33,70.65) .. controls (337.1,71.95) and (337.49,74.54) .. (336.2,76.43) -- cycle ;

    \draw    (313.76,134.72) -- (327.45,113.76) ;

    \draw    (357.62,95.72) -- (333.49,109.2) ;

    \draw    (332.8,78.43) -- (329.06,106.53) ;

    \draw   (233.55,111.15) .. controls (233.21,113.44) and (231.2,115.02) .. (229.06,114.68) .. controls (226.92,114.34) and (225.47,112.2) .. (225.81,109.91) .. controls (226.15,107.61) and (228.16,106.03) .. (230.3,106.38) .. controls (232.44,106.72) and (233.89,108.86) .. (233.55,111.15) -- cycle ;

    \draw   (254.65,81.62) .. controls (254.31,83.91) and (252.3,85.49) .. (250.16,85.15) .. controls (248.02,84.81) and (246.56,82.67) .. (246.91,80.38) .. controls (247.25,78.09) and (249.26,76.51) .. (251.39,76.85) .. controls (253.53,77.19) and (254.99,79.33) .. (254.65,81.62) -- cycle ;

    \draw   (220.84,77.12) .. controls (220.5,79.41) and (218.49,80.99) .. (216.36,80.65) .. controls (214.22,80.3) and (212.76,78.17) .. (213.1,75.87) .. controls (213.44,73.58) and (215.45,72) .. (217.59,72.34) .. controls (219.73,72.69) and (221.18,74.82) .. (220.84,77.12) -- cycle ;

    \draw    (225.85,139.89) -- (229.06,114.68) ;

    \draw    (248.22,84.39) -- (232.49,107.77) ;

    \draw    (218.68,80.5) -- (227.4,107.45) ;

    \draw   (199.13,123.72) .. controls (201,124.99) and (201.55,127.54) .. (200.36,129.41) .. controls (199.16,131.28) and (196.68,131.76) .. (194.81,130.48) .. controls (192.94,129.2) and (192.39,126.65) .. (193.58,124.79) .. controls (194.77,122.92) and (197.26,122.44) .. (199.13,123.72) -- cycle ;

    \draw   (163.94,121.48) .. controls (165.81,122.76) and (166.36,125.31) .. (165.17,127.18) .. controls (163.98,129.04) and (161.5,129.52) .. (159.63,128.25) .. controls (157.75,126.97) and (157.2,124.42) .. (158.4,122.55) .. controls (159.59,120.69) and (162.07,120.21) .. (163.94,121.48) -- cycle ;

    \draw    (221.15,142.94) -- (200.36,129.41) ;

    \draw    (181.74,98.76) -- (195.72,123.3) ;

    \draw    (166.02,124.93) -- (193.31,127.96) ;

    \draw   (185.12,94.09) .. controls (185.4,96.39) and (183.89,98.48) .. (181.74,98.76) .. controls (179.6,99.03) and (177.63,97.39) .. (177.35,95.09) .. controls (177.07,92.79) and (178.58,90.7) .. (180.73,90.42) .. controls (182.88,90.15) and (184.84,91.79) .. (185.12,94.09) -- cycle ;

    \draw  [fill={rgb, 255:red, 0; green, 0; blue, 255 }  ,fill opacity=1 ] (183.88,171.14) .. controls (186.04,171.76) and (187.32,174.01) .. (186.74,176.17) .. controls (186.16,178.32) and (183.93,179.57) .. (181.78,178.95) .. controls (179.62,178.33) and (178.34,176.08) .. (178.92,173.92) .. controls (179.5,171.76) and (181.72,170.52) .. (183.88,171.14) -- cycle ;

    \draw   (158.04,145.97) .. controls (160.2,146.59) and (161.48,148.84) .. (160.9,151) .. controls (160.32,153.16) and (158.09,154.4) .. (155.94,153.78) .. controls (153.78,153.16) and (152.5,150.91) .. (153.08,148.75) .. controls (153.66,146.6) and (155.88,145.35) .. (158.04,145.97) -- cycle ;

    \draw   (149.73,180.19) .. controls (151.89,180.81) and (153.17,183.06) .. (152.59,185.22) .. controls (152.01,187.37) and (149.79,188.62) .. (147.63,188) .. controls (145.47,187.38) and (144.19,185.13) .. (144.77,182.97) .. controls (145.35,180.81) and (147.57,179.57) .. (149.73,180.19) -- cycle ;
    \draw [color={rgb, 255:red, 255; green, 0; blue, 0 }  ,draw opacity=1 ][line width=1.5]    (210.54,182.44) -- (186.74,176.17) ;
    \draw    (159.94,152.91) -- (180.51,171.82) ;
    \draw    (152.73,182.81) -- (179.61,177.03) ;
    \draw  [fill={rgb, 255:red, 65; green, 117; blue, 5 }  ,fill opacity=1 ] (189.34,204.1) .. controls (191.01,202.55) and (193.53,202.65) .. (194.98,204.31) .. controls (196.42,205.98) and (196.24,208.59) .. (194.58,210.14) .. controls (192.91,211.69) and (190.39,211.6) .. (188.95,209.93) .. controls (187.5,208.27) and (187.68,205.66) .. (189.34,204.1) -- cycle ;
    \draw [color={rgb, 255:red, 65; green, 117; blue, 5 }  ,draw opacity=1 ][line width=1.5]    (212.87,186.89) -- (194.98,204.31) ;
    \draw [color={rgb, 255:red, 255; green, 0; blue, 0 }  ,draw opacity=1 ][line width=1.5]    (161.63,215.23) -- (188.1,207.41) ;
    \draw [color={rgb, 255:red, 255; green, 0; blue, 0 }  ,draw opacity=1 ][line width=1.5]    (182.13,237.58) -- (191.85,211) ;
    \draw   (303.33,106.18) .. controls (304.15,108.35) and (303.18,110.75) .. (301.16,111.55) .. controls (299.15,112.34) and (296.85,111.23) .. (296.03,109.06) .. controls (295.21,106.9) and (296.18,104.5) .. (298.2,103.7) .. controls (300.21,102.91) and (302.51,104.02) .. (303.33,106.18) -- cycle ;
    \draw   (307.24,69.74) .. controls (308.06,71.91) and (307.09,74.31) .. (305.07,75.1) .. controls (303.06,75.9) and (300.76,74.79) .. (299.94,72.62) .. controls (299.12,70.45) and (300.09,68.05) .. (302.11,67.26) .. controls (304.13,66.46) and (306.42,67.57) .. (307.24,69.74) -- cycle ;
    \draw   (276.22,82.53) .. controls (277.01,84.71) and (276.01,87.1) .. (273.99,87.87) .. controls (271.96,88.64) and (269.68,87.5) .. (268.88,85.32) .. controls (268.09,83.15) and (269.09,80.76) .. (271.12,79.99) .. controls (273.14,79.22) and (275.43,80.36) .. (276.22,82.53) -- cycle ;

    \draw    (310.63,135.09) -- (301.16,111.55) ;
    \draw    (303.02,75.43) -- (300.78,103.79) ;
    \draw    (275.58,87.16) -- (296.21,106.12) ;
    \draw  [fill={rgb, 255:red, 65; green, 117; blue, 5 }  ,fill opacity=1 ] (334.81,179.42) .. controls (332.57,179.36) and (330.81,177.49) .. (330.87,175.25) .. controls (330.93,173.02) and (332.79,171.25) .. (335.03,171.32) .. controls (337.27,171.38) and (339.04,173.24) .. (338.98,175.48) .. controls (338.92,177.72) and (337.05,179.48) .. (334.81,179.42) -- cycle ;
    \draw  [fill={rgb, 255:red, 65; green, 117; blue, 5 }  ,fill opacity=1 ] (365.21,162.11) .. controls (362.97,162.04) and (361.2,160.18) .. (361.26,157.94) .. controls (361.32,155.7) and (363.19,153.94) .. (365.43,154) .. controls (367.67,154.06) and (369.44,155.93) .. (369.38,158.17) .. controls (369.32,160.4) and (367.45,162.17) .. (365.21,162.11) -- cycle ;
    \draw [color={rgb, 255:red, 65; green, 117; blue, 5 }  ,draw opacity=1 ][line width=1.5]    (306.3,175.12) -- (330.87,175.25) ;
    \draw [color={rgb, 255:red, 255; green, 0; blue, 0 }  ,draw opacity=1 ][line width=1.5]    (362.3,191.13) -- (337.92,177.91) ;
    \draw [color={rgb, 255:red, 65; green, 117; blue, 5 }  ,draw opacity=1 ][line width=1.5]    (362.32,160.31) -- (337.59,172.64) ;
    \draw   (426.14,188.69) .. controls (423.91,188.4) and (422.33,186.36) .. (422.6,184.14) .. controls (422.88,181.92) and (424.9,180.35) .. (427.12,180.64) .. controls (429.34,180.93) and (430.93,182.97) .. (430.65,185.19) .. controls (430.38,187.41) and (428.36,188.98) .. (426.14,188.69) -- cycle ;
    \draw   (429.75,155.99) .. controls (427.53,155.7) and (425.95,153.66) .. (426.22,151.44) .. controls (426.49,149.22) and (428.51,147.65) .. (430.74,147.94) .. controls (432.96,148.23) and (434.54,150.27) .. (434.27,152.49) .. controls (434,154.71) and (431.97,156.28) .. (429.75,155.99) -- cycle ;
    \draw [color={rgb, 255:red, 255; green, 0; blue, 0 }  ,draw opacity=1 ][line width=1.5]    (369.07,160.51) -- (393.5,163.13) ;
    \draw    (423.29,182.11) -- (400.27,166.49) ;
    \draw    (426.22,151.44) -- (400.44,161.21) ;
    \draw  [fill={rgb, 255:red, 65; green, 117; blue, 5 }  ,fill opacity=1 ] (392.13,138.45) .. controls (390.38,139.9) and (387.87,139.66) .. (386.52,137.91) .. controls (385.17,136.16) and (385.5,133.56) .. (387.25,132.11) .. controls (389,130.67) and (391.51,130.91) .. (392.86,132.66) .. controls (394.21,134.41) and (393.88,137.01) .. (392.13,138.45) -- cycle ;
    \draw [color={rgb, 255:red, 65; green, 117; blue, 5 }  ,draw opacity=1 ][line width=1.5]    (367.67,154.21) -- (386.52,137.91) ;
    \draw [color={rgb, 255:red, 255; green, 9; blue, 0 }  ,draw opacity=1 ][line width=1.5]    (420.43,129.04) -- (393.57,135.23) ;
    \draw [color={rgb, 255:red, 255; green, 0; blue, 0 }  ,draw opacity=1 ][line width=1.5]    (401.23,105.48) -- (390.02,131.42) ;
    \draw   (394.13,209.78) .. controls (392.05,208.9) and (391.04,206.51) .. (391.86,204.44) .. controls (392.68,202.37) and (395.02,201.4) .. (397.1,202.28) .. controls (399.17,203.15) and (400.19,205.54) .. (399.37,207.62) .. controls (398.55,209.69) and (396.2,210.66) .. (394.13,209.78) -- cycle ;
    \draw   (416.96,237.89) .. controls (414.88,237.02) and (413.87,234.62) .. (414.69,232.55) .. controls (415.51,230.48) and (417.85,229.51) .. (419.93,230.39) .. controls (422,231.26) and (423.02,233.66) .. (422.2,235.73) .. controls (421.38,237.8) and (419.03,238.77) .. (416.96,237.89) -- cycle ;
    \draw   (428.65,207.28) .. controls (426.58,206.41) and (425.56,204.02) .. (426.38,201.95) .. controls (427.2,199.87) and (429.55,198.9) .. (431.62,199.78) .. controls (433.7,200.66) and (434.71,203.05) .. (433.89,205.12) .. controls (433.07,207.19) and (430.73,208.16) .. (428.65,207.28) -- cycle ;
    \draw    (368.92,195.34) -- (391.86,204.44) ;
    \draw    (415.86,230.78) -- (397.55,209.5) ;
    \draw    (426.38,201.95) -- (399.04,204.45) ;
    \draw   (373.71,225.92) .. controls (372.83,223.78) and (373.73,221.34) .. (375.73,220.47) .. controls (377.72,219.6) and (380.04,220.62) .. (380.92,222.75) .. controls (381.8,224.89) and (380.89,227.33) .. (378.9,228.2) .. controls (376.91,229.07) and (374.58,228.05) .. (373.71,225.92) -- cycle ;
    \draw   (370.79,262.52) .. controls (369.91,260.38) and (370.82,257.94) .. (372.81,257.07) .. controls (374.8,256.2) and (377.13,257.22) .. (378,259.35) .. controls (378.88,261.49) and (377.97,263.93) .. (375.98,264.8) .. controls (373.99,265.68) and (371.66,264.65) .. (370.79,262.52) -- cycle ;
    \draw   (399.91,249.31) .. controls (399.04,247.18) and (399.94,244.74) .. (401.94,243.87) .. controls (403.93,242.99) and (406.25,244.01) .. (407.13,246.15) .. controls (408,248.28) and (407.1,250.72) .. (405.11,251.6) .. controls (403.12,252.47) and (400.79,251.45) .. (399.91,249.31) -- cycle ;
    \draw    (365.63,197.29) -- (375.73,220.47) ;
    \draw    (374.85,256.66) -- (376.32,228.21) ;
    \draw    (401.93,243.87) -- (380.82,225.71) ;
    \draw   (226.47,277.06) .. controls (228.36,275.82) and (230.83,276.35) .. (231.98,278.25) .. controls (233.14,280.14) and (232.53,282.68) .. (230.64,283.92) .. controls (228.74,285.16) and (226.27,284.62) .. (225.12,282.73) .. controls (223.97,280.83) and (224.57,278.29) .. (226.47,277.06) -- cycle ;
    \draw   (191.09,279.93) .. controls (192.99,278.7) and (195.46,279.23) .. (196.61,281.13) .. controls (197.77,283.02) and (197.16,285.56) .. (195.27,286.8) .. controls (193.37,288.04) and (190.9,287.5) .. (189.75,285.61) .. controls (188.6,283.71) and (189.2,281.17) .. (191.09,279.93) -- cycle ;
    \draw   (209.98,309.31) .. controls (211.87,308.07) and (214.34,308.6) .. (215.5,310.5) .. controls (216.65,312.39) and (216.04,314.93) .. (214.15,316.17) .. controls (212.25,317.41) and (209.78,316.87) .. (208.63,314.98) .. controls (207.48,313.08) and (208.08,310.54) .. (209.98,309.31) -- cycle ;
    \draw    (252.47,264.22) -- (231.98,278.25) ;
    \draw    (197.33,283.14) -- (224.69,280.1) ;
    \draw    (213.87,308.77) -- (227.81,284.29) ;
    \draw   (253.71,295.04) .. controls (253.72,292.72) and (255.48,290.85) .. (257.65,290.86) .. controls (259.81,290.87) and (261.56,292.76) .. (261.55,295.08) .. controls (261.54,297.4) and (259.78,299.27) .. (257.61,299.26) .. controls (255.45,299.25) and (253.7,297.36) .. (253.71,295.04) -- cycle ;
    \draw   (237.07,327.47) .. controls (237.08,325.15) and (238.84,323.28) .. (241,323.29) .. controls (243.17,323.3) and (244.91,325.19) .. (244.9,327.51) .. controls (244.9,329.83) and (243.13,331.71) .. (240.97,331.69) .. controls (238.81,331.68) and (237.06,329.79) .. (237.07,327.47) -- cycle ;
    \draw   (271.15,326.75) .. controls (271.16,324.43) and (272.92,322.56) .. (275.08,322.57) .. controls (277.24,322.58) and (278.99,324.47) .. (278.98,326.79) .. controls (278.97,329.11) and (277.21,330.98) .. (275.05,330.97) .. controls (272.88,330.96) and (271.14,329.07) .. (271.15,326.75) -- cycle ;

    \draw    (257.22,265.44) -- (257.65,290.86) ;

    \draw    (243.03,323.75) -- (255.24,298.22) ;

    \draw    (272.8,323.07) -- (260.33,297.75) ;

    \draw   (301.15,293.08) .. controls (300.19,291.04) and (301.09,288.7) .. (303.16,287.86) .. controls (305.22,287.01) and (307.67,287.97) .. (308.62,290.01) .. controls (309.58,292.05) and (308.68,294.39) .. (306.61,295.24) .. controls (304.55,296.09) and (302.1,295.12) .. (301.15,293.08) -- cycle ;

    \draw   (298.79,328.15) .. controls (297.83,326.11) and (298.73,323.77) .. (300.8,322.93) .. controls (302.86,322.08) and (305.31,323.04) .. (306.26,325.08) .. controls (307.22,327.12) and (306.32,329.46) .. (304.25,330.31) .. controls (302.19,331.16) and (299.74,330.19) .. (298.79,328.15) -- cycle ;

    \draw   (330.91,314.01) .. controls (329.96,311.97) and (330.86,309.62) .. (332.92,308.78) .. controls (334.98,307.93) and (337.43,308.9) .. (338.39,310.94) .. controls (339.34,312.98) and (338.44,315.32) .. (336.38,316.16) .. controls (334.31,317.01) and (331.87,316.04) .. (330.91,314.01) -- cycle ;

    \draw    (292.18,265.71) -- (303.16,287.85) ;

    \draw    (302.92,322.52) -- (303.92,295.27) ;

    \draw    (330.96,310.12) -- (308.57,292.84) ;

    \draw   (323.14,269.06) .. controls (320.82,268.7) and (319.21,266.7) .. (319.55,264.58) .. controls (319.88,262.47) and (322.02,261.03) .. (324.34,261.39) .. controls (326.65,261.74) and (328.26,263.74) .. (327.93,265.86) .. controls (327.6,267.98) and (325.45,269.41) .. (323.14,269.06) -- cycle ;

    \draw   (353.06,290.16) .. controls (350.75,289.8) and (349.14,287.8) .. (349.47,285.68) .. controls (349.8,283.57) and (351.95,282.14) .. (354.26,282.49) .. controls (356.58,282.84) and (358.19,284.84) .. (357.86,286.96) .. controls (357.52,289.08) and (355.38,290.51) .. (353.06,290.16) -- cycle ;

    \draw   (357.39,256.67) .. controls (355.07,256.32) and (353.46,254.32) .. (353.79,252.2) .. controls (354.13,250.08) and (356.27,248.65) .. (358.59,249.01) .. controls (360.9,249.36) and (362.51,251.36) .. (362.18,253.48) .. controls (361.85,255.6) and (359.7,257.03) .. (357.39,256.67) -- cycle ;

    \draw    (294.09,261.24) -- (319.55,264.58) ;

    \draw    (350.23,283.76) -- (326.54,268.03) ;

    \draw    (353.95,254.51) -- (326.83,262.98) ;

    \draw  [fill={rgb, 255:red, 0; green, 0; blue, 255 }  ,fill opacity=1 ] (312.79,134.82) .. controls (314.95,135.44) and (316.23,137.69) .. (315.65,139.85) .. controls (315.06,142) and (312.84,143.25) .. (310.68,142.63) .. controls (308.52,142.01) and (307.25,139.76) .. (307.83,137.6) .. controls (308.41,135.44) and (310.63,134.2) .. (312.79,134.82) -- cycle ;

    \draw  [fill={rgb, 255:red, 0; green, 0; blue, 255 }  ,fill opacity=1 ] (225.85,139.89) .. controls (228.01,140.51) and (229.29,142.76) .. (228.71,144.92) .. controls (228.13,147.08) and (225.91,148.32) .. (223.75,147.7) .. controls (221.59,147.08) and (220.31,144.83) .. (220.89,142.67) .. controls (221.47,140.51) and (223.7,139.27) .. (225.85,139.89) -- cycle ;

    \draw  [fill={rgb, 255:red, 0; green, 0; blue, 255 }  ,fill opacity=1 ] (398.46,160.35) .. controls (400.62,160.97) and (401.9,163.22) .. (401.32,165.38) .. controls (400.74,167.54) and (398.52,168.78) .. (396.36,168.16) .. controls (394.2,167.54) and (392.92,165.29) .. (393.5,163.13) .. controls (394.08,160.97) and (396.31,159.73) .. (398.46,160.35) -- cycle ;

    \draw  [fill={rgb, 255:red, 0; green, 0; blue, 255 }  ,fill opacity=1 ] (366.06,190.31) .. controls (368.22,190.93) and (369.5,193.19) .. (368.92,195.34) .. controls (368.34,197.5) and (366.12,198.75) .. (363.96,198.13) .. controls (361.8,197.5) and (360.52,195.25) .. (361.1,193.1) .. controls (361.68,190.94) and (363.9,189.69) .. (366.06,190.31) -- cycle ;

    \draw  [fill={rgb, 255:red, 0; green, 0; blue, 255 }  ,fill opacity=1 ] (272.9,225.53) .. controls (275.06,226.15) and (276.34,228.4) .. (275.75,230.55) .. controls (275.17,232.71) and (272.95,233.96) .. (270.79,233.34) .. controls (268.63,232.72) and (267.35,230.46) .. (267.94,228.31) .. controls (268.52,226.15) and (270.74,224.9) .. (272.9,225.53) -- cycle ;

    \draw  [fill={rgb, 255:red, 0; green, 0; blue, 255 }  ,fill opacity=1 ] (158.77,210.2) .. controls (160.93,210.82) and (162.21,213.07) .. (161.63,215.23) .. controls (161.05,217.39) and (158.83,218.63) .. (156.67,218.01) .. controls (154.51,217.39) and (153.23,215.14) .. (153.81,212.98) .. controls (154.39,210.82) and (156.61,209.58) .. (158.77,210.2) -- cycle ;

    \draw  [fill={rgb, 255:red, 0; green, 0; blue, 255 }  ,fill opacity=1 ] (403.34,97.67) .. controls (405.5,98.29) and (406.77,100.54) .. (406.19,102.7) .. controls (405.61,104.85) and (403.39,106.1) .. (401.23,105.48) .. controls (399.07,104.86) and (397.79,102.61) .. (398.38,100.45) .. controls (398.96,98.29) and (401.18,97.05) .. (403.34,97.67) -- cycle ;

    \draw  [fill={rgb, 255:red, 0; green, 0; blue, 255 }  ,fill opacity=1 ] (182.13,237.58) .. controls (184.29,238.2) and (185.57,240.45) .. (184.99,242.61) .. controls (184.41,244.76) and (182.19,246.01) .. (180.03,245.39) .. controls (177.87,244.77) and (176.59,242.52) .. (177.17,240.36) .. controls (177.75,238.2) and (179.97,236.96) .. (182.13,237.58) -- cycle ;
 
     \draw  [fill={rgb, 255:red, 0; green, 0; blue, 255 }  ,fill opacity=1 ] (425.39,126.26) .. controls (427.55,126.88) and (428.83,129.13) .. (428.25,131.28) .. controls (427.67,133.44) and (425.45,134.69) .. (423.29,134.07) .. controls (421.13,133.45) and (419.85,131.19) .. (420.43,129.04) .. controls (421.01,126.88) and (423.23,125.63) .. (425.39,126.26) -- cycle ;
    
    \draw (405.53,77.3) node [anchor=north west][inner sep=0.75pt]    {$[ L_{2}]$};
    \draw (116.52,203.67) node [anchor=north west][inner sep=0.75pt]    {$[ L_{1}]$};
    \draw (282.15,219.57) node [anchor=north west][inner sep=0.75pt]    {$[ L_{3}]$};
    \end{tikzpicture}
\end{center}
\caption{
The bolytrope 
$\B_1(Q) = \B_1(
{\tiny \left(\begin{array}{@{}c@{}c@{}} 0 & 7 \\ 0 & 0 \end{array} \right)}
)$ 
in the Bruhat Tits tree of $\mathrm{SL}_2(\QQ_2)$. 
The green segment is the central polytrope $Q:=Q({\tiny\left(\begin{array}{@{}c@{}c@{}} 0 & 7 \\ 0 & 0 \end{array} \right)})  = \{ [L_{(i,0)} \mid 0\leq i \leq 7 \} $. 
The set ${\mathcal L} = Q({\tiny\left(\begin{array}{@{}c@{}c@{}} 0 & 8 \\ 1 & 0 \end{array} \right)}) $ is the convex hull of  $[L_1] = [L_{(0,1)}]$ and $[L_2] = [L_{(8,0)}]$. The blue vertices are the points at distance $1$ from $Q$. The PZ-order of the lattice classes $[L_1],[L_2]$ and $[L_3]$ is the same as the PZ-order of all the colored vertices.
}\label{fig:example_d=2}
\end{figure}

\begin{corollary}
    The $\PZ$-closed subset of $\mathcal{B}_2^0(K)$ are precisely the bolytropes. 
\end{corollary}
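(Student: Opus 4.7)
The plan is to deduce the corollary directly from \Cref{th:bolytropes} (giving one inclusion) and \Cref{thm:d=2} (giving the reverse), using the fact that a $\PZ$-closed set depends only on the closure of the associated order.

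First I would verify the easy direction. If $B = \B_r(M)$ is a bolytrope, then \Cref{th:bolytropes} yields $B = Q(\Beta_r(M))$, so $B$ is $\PZ$-closed by definition.

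For the harder direction, let $S \subseteq \mathcal{B}_2^0(K)$ be $\PZ$-closed, say $S = Q(\Lambda)$ for some order $\Lambda$ in $K^{2\times 2}$. The point is that replacing $\Lambda$ by $\Lambda' := \PZ(S) = \bigcap_{[L]\in S}\End_{\Ocal_K}(L)$ does not change $S$: clearly $\Lambda \subseteq \Lambda'$, so $S = Q(\Lambda) \supseteq Q(\Lambda')$, and on the other hand every $[L]\in S$ satisfies $\Lambda' \subseteq \End_{\Ocal_K}(L)$, giving $S \subseteq Q(\Lambda')$. Thus $\Lambda'$ is a closed order with $Q(\Lambda') = S$. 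Now \Cref{thm:d=2} applies: there exist $r, m \in \ZZ_{\geq 0}$ such that, writing $M = \bigl(\begin{smallmatrix} 0 & m \\ 0 & 0 \end{smallmatrix}\bigr)$, we have $\Lambda' = \Beta_r(M)$. Invoking \Cref{th:bolytropes} once more, $S = Q(\Lambda') = Q(\Beta_r(M)) = \B_r(M)$, which is a bolytrope.

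I do not expect any real obstacle: the corollary is essentially a formal consequence of the classification in \Cref{thm:d=2} combined with the equality $Q(\Beta_r(M)) = \B_r(M)$ from \Cref{th:bolytropes}. The only subtle point worth mentioning is the reduction from an arbitrary order $\Lambda$ with $Q(\Lambda) = S$ to a closed order with the same invariant-lattice set, which is handled by the Galois-correspondence argument above.
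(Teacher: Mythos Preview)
Your proposal is correct and follows exactly the route the paper intends: the corollary is stated without proof in the paper because it is an immediate consequence of \Cref{thm:d=2} together with \Cref{th:bolytropes}, precisely as you argue. The reduction from an arbitrary $\Lambda$ with $Q(\Lambda)=S$ to the closed order $\PZ(S)$ is the standard Galois-correspondence step and is the only thing one needs to spell out.
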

\begin{corollary}
The degree of a closed order $\Lambda $ in $K^{2\times 2}$ is $0$, $1$, or $2$.
Orders of degree $0$ are the maximal orders, whereas the closed orders of degree $1$ are precisely the graduated  non-maximal orders. 
All non-graduated closed orders in $K^{2\times 2} $ have degree $2$.  
\end{corollary}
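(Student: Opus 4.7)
The plan is to combine \Cref{thm:d=2}, \Cref{thm:bolystar}, and \Cref{polytrope} with the observation that any two lattice classes in $K^2$ lie in a common apartment. By \Cref{thm:d=2} every closed order in $K^{2\times 2}$ is a bolytrope order $\Beta_r(M)$, and \Cref{thm:bolystar} then gives the upper bound $\deg(\Lambda)\leq d = 2$, so the degree is automatically in $\{0,1,2\}$.

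Next I would dispatch the two extreme cases. The degree-$0$ claim is essentially a tautology: $\deg(\Lambda)=0$ means $\Lambda=\PZ(\{[L]\})=\End_{\Ocal_K}(L)$, which is a maximal order, and every maximal order is of this form. For the graduated non-maximal orders, I would write such an order as $\Lambda(M)$ where $Q_M\subset \RR^2/\RR{\bf 1}$ is necessarily a segment (points would give maximal orders); \Cref{polytrope} then yields $\deg(\Lambda(M))=\dim(Q_M)=1$.

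The main step is the converse direction: if $\Lambda$ is closed with $\deg(\Lambda)\leq 1$, then $\Lambda$ is graduated. By definition we can write $\Lambda=\PZ(\{[L_1],[L_2]\})=\End_{\Ocal_K}(L_1)\cap \End_{\Ocal_K}(L_2)$. Here I would invoke the elementary divisor theorem (already used after \Cref{lem:dist-welldef}) to choose compatible bases placing $[L_1]$ and $[L_2]$ in a single apartment $\mathcal{A}(E)$, so that both $\End_{\Ocal_K}(L_i)$ contain the complete set of orthogonal primitive idempotents defining $E$. Their intersection $\Lambda$ then also contains this set, so $\Lambda$ is graduated by definition.

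Combining: any non-graduated closed order must have $\deg(\Lambda)\geq 2$, and combined with the bound $\deg(\Lambda)\leq 2$ this forces equality. The potential subtlety is making sure the two-point case $|\mathcal{L}|=2$ is handled correctly (and in particular that we use $d=2$, where every two lattice classes are automatically collinear in an apartment — this is precisely what fails in higher dimensions), but this is immediate from the elementary divisor theorem and so presents no real obstacle.
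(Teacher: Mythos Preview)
Your argument is correct and is exactly the derivation the paper intends (the corollary is stated without proof, relying on \Cref{thm:d=2}, \Cref{thm:bolystar}, and \Cref{polytrope} just as you do).

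One small inaccuracy worth flagging: your parenthetical remark that the two-lattice step ``is precisely what fails in higher dimensions'' is not right. The elementary divisor theorem puts \emph{any} two lattice classes into a common apartment for every $d$ (this is stated in the paper just after \Cref{lem:dist-welldef}), so $\PZ(\{[L_1],[L_2]\})$ is graduated in all dimensions and closed orders of degree $\leq 1$ are always graduated. The dimension $d=2$ enters only through the upper bound $\deg(\Lambda)\leq d$ from \Cref{thm:bolystar}, not through the two-point argument.
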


\begin{remark}
Theorem \ref{thm:bolystar} implies \cite[Theorems 1 and 8]{Tu/11}. To see this, note that, by taking $[L_1], [L_2], [L_3]$ as in the proof of \Cref{thm:d=2}, we get that 
	$\Lambda = \PZ ([L_1],[L_2],[L_3])$.
\end{remark}

    \bigskip 
	
	{\bf Acknowledgements}.
	This project is supported by the Deutsche Forschungsgemeinschaft 
	(DFG, German Research Foundation) -- Project-ID 286237555 --TRR 195.
	We thank Bernd Sturmfels and Marvin Anas Hahn for mathematical discussions. We thank the anonymous referee for their valuable comments.
	\bigskip


\begin{thebibliography}{10}

\bibitem{AbramenkoBrown}
P.~Abramenko and K.~S. Brown.
\newblock {\em Buildings: Theory and applications}, volume 248 of {\em Graduate
  Texts in Mathematics}.
\newblock Springer, New York, 2008.

\bibitem{AbramenkoNebe}
P.~Abramenko and G.~Nebe.
\newblock Lattice chain models for affine buildings of classical type.
\newblock {\em Math. Ann.}, 322(3):537--562, 2002.

\bibitem{Arenas3}
M.~Arenas and L.~Arenas-Carmona.
\newblock Branches on division algebras.
\newblock {\em J. Number Theory}, 206:24--45, 2020.

\bibitem{Arenas4}
L.~Arenas-Carmona.
\newblock Eichler orders, trees and representation fields.
\newblock {\em Int. J. Number Theory}, 9(7):1725--1741, 2013.

\bibitem{Arenas2}
L.~Arenas-Carmona and C.~Bravo.
\newblock Computing embedding numbers and branches of orders via extensions of
  the {B}ruhat-{T}its tree.
\newblock {\em Int. J. Number Theory}, 15(10):2067--2088, 2019.

\bibitem{Arenas1}
L.~Arenas-Carmona and I.~Saavedra.
\newblock On some branches of the {B}ruhat-{T}its tree.
\newblock {\em Int. J. Number Theory}, 12(3):813--831, 2016.

\bibitem{EHNSS/21}
Y.~El~Maazouz, M.~A. Hahn, G.~Nebe, M.~Stanojkovski, and B.~Sturmfels.
\newblock Orders and polytropes: Matrix algebras from valuations.
	\newblock {\em Beitr. Algebra Geom.} 63, 515-531 (2022).

\bibitem{Garret}
P.~Garrett.
\newblock {\em Buildings and classical groups}.
\newblock Chapman \& Hall, London, 1997.

\bibitem{Jos22}
M.~Joswig.
\newblock {\em Essentials of Tropical Combinatorics}.
\newblock Graduate Studies in Mathematics, American Mathematical Society, 2022.

\bibitem{NebeSteel}
G.~Nebe and A.~Steel.
\newblock Recognition of division algebras.
\newblock {\em J. Algebra}, 322(3):903--909, 2009.

\bibitem{Ple83}
W.~{Plesken}.
\newblock {\em {Group rings of finite groups over p-adic integers}}, volume
  1026.
\newblock Springer, Cham, 1983.

\bibitem{Reiner/75}
I.~Reiner.
\newblock {\em Maximal orders}.
\newblock London Mathematical Society Monographs, No. 5. Academic Press [A
  subsidiary of Harcourt Brace Jovanovich, Publishers], London-New York, 1975.

\bibitem{SerreTrees}
J.-P. Serre.
\newblock {\em Trees}.
\newblock Springer Monographs in Mathematics. Springer-Verlag, Berlin, 2003.
\newblock Translated from the French original by John Stillwell, Corrected 2nd
  printing of the 1980 English translation.

\bibitem{shemanske}
T.~R. Shemanske.
\newblock Split orders and convex polytopes in buildings.
\newblock {\em J. Number Theory}, 130(1):101--115, 2010.

\bibitem{Tu/11}
F.-T. Tu.
\newblock On orders of {$M(2,K)$} over a non-{A}rchimedean local field.
\newblock {\em Int. J. Number Theory}, 7(5):1137--1149, 2011.

\end{thebibliography}
\end{document}